\newcommand{\Z}{\mathbb{Z}}
\newcommand{\Zpk}[1]{\mathbb{Z}/p^{#1}\mathbb{Z}}
\newcommand{\Zp}{\mathbb{Z}/p\mathbb{Z}}
\newcommand{\identity}{1_{\scriptscriptstyle{G}}}
\newcommand{\DL}{{\rm DL}}
\newcommand{\EDL}{{\rm EDL}}
\newcommand{\evec}[1]{{\boldsymbol{#1}}}
\newcommand{\gvec}[1]{{\boldsymbol{#1}}}
\newcommand{\ZZ}{\mathbb{Z}}
\newcommand{\TDL}{T_{\scriptscriptstyle{\rm{DL}}}}
\newcommand{\TB}{T_{\scriptscriptstyle{\rm{B}}}}
\newtheorem{theorem}{Theorem}
\newtheorem{proposition}{Proposition}
\newtheorem{corollary}{Corollary}
\newtheorem{lemma}{Lemma}
\newtheorem{algorithm}{Algorithm}
\theoremstyle{definition}
\newtheorem{example}{Example}
\theoremstyle{remark}
\begin{document}

\title[Structure computation and discrete logarithms]{Structure computation and discrete logarithms in finite abelian $p$-groups}
\author{Andrew V. Sutherland}
\address{Massachusetts Institute of Technology}
\email{drew@math.mit.edu}
\subjclass[2000]{Primary 11Y16; Secondary 20K01, 12Y05}

\begin{abstract}
We present a generic algorithm for computing discrete logarithms in a finite abelian $p$-group $H$, improving the Pohlig--Hellman algorithm and its generalization to noncyclic groups by Teske.  We then give a direct method to compute a basis for $H$ without using a relation matrix.  The problem of computing a basis for some or all of the Sylow $p$-subgroups of an arbitrary finite abelian group $G$ is addressed, yielding a Monte Carlo algorithm to compute the structure of $G$ using $O(|G|^{1/2})$ group operations.  These results also improve generic algorithms for extracting $p$th roots in $G$.
\end{abstract}

\maketitle

\section{Introduction}

The discrete logarithm plays two opposing roles in group computations.  As a constructive tool, discrete logarithms are the key ingredient in generic algorithms for extracting roots (including square roots in finite fields) \cite{Adleman:Roots,Shanks:FiveAlgorithms,Sutherland:AbelianRoots,Tonelli:SquareRoot} and for computing group structure \cite{Buchmann:BabyGiant,Buchmann:GroupStructure,Sutherland:Thesis,Teske:GroupStructure,Teske:PohligHellmanStructure}.
On the other hand, a wide range of cryptographic applications depend on the essential difficulty of computing discrete logarithms in the worst case (see \cite{McCurly:DiscreteLogarithm} or \cite{Odlyzko:DiscretLogarithms} for a survey).

Typically, the discrete logarithm is defined in the context of a cyclic group: for any $\beta\in\langle\alpha\rangle$ there is a unique nonnegative integer $x < |\alpha|$ for which $\beta=\alpha^x$.  More generally, given $\gvec{\alpha}=(\alpha_1,\ldots,\alpha_r)$, if every $\beta\in\langle\gvec{\alpha}\rangle$ can be written uniquely as\footnote{Consistent with our use of the word ``logarithm", we write groups multiplicatively.}
$$\beta=\gvec{\alpha}^{\evec{x}}=\alpha_1^{x_1}\cdots\alpha_r^{x_r},$$
with $0\le x_i<|\alpha_i|$, then $\evec{x}=\DL(\gvec{\alpha},\beta)$ is the \emph{discrete logarithm} of~$\beta$ with respect to~$\gvec{\alpha}$, and we call the vector $\gvec{\alpha}$ a \emph{basis} for the group it generates.
%\footnote{This definition applies to any finite polycyclic group, but we treat only abelian groups here.}
We work in the computational framework of generic group algorithms, as defined, for example, in~\cite{Sutherland:Thesis}.  Thus we suppose that a ``black box" is used to perform group operations, possibly including the provision of random elements, with each group element arbitrarily assigned a unique identifier.

We are interested in constructive applications of the discrete logarithm, but let us first recall the negative result of Shoup \cite{Shoup:DLLowerBound}.  Any generic algorithm to compute discrete logarithms in a finite abelian group $G$ with prime exponent\footnote{The exponent of $G$ is the least positive integer $n$ for which $\alpha^n=\identity$ for all $\alpha\in G$.} uses $\Omega(|G|^{1/2})$ group operations.  A matching upper bound is achieved, for cyclic groups, by Shanks' baby-step giant-step algorithm \cite{Shanks:BabyGiant} and (probabilistically) by Pollard's rho method \cite{Pollard:RhoDL,Teske:SpeedingRho}.  Both algorithms can be generalized to compute discrete logarithms in any finite abelian group using $O(|G|^{1/2})$ group operations \cite{Buchmann:BabyGiant,Sutherland:Thesis,Teske:GroupStructure}.

However, when the exponent of the group is not prime, we can do better.  This was proven for cyclic groups by Pohlig and Hellman \cite{Pohlig:DiscreteLog} and later generalized by Teske \cite{Teske:PohligHellmanStructure}.\footnote{Pohlig and Hellman credit Roland Silver, and also Richard Schroeppel and H. Block, for (unpublished) independent discovery of the same algorithm \cite[p.~107]{Pohlig:DiscreteLog}.}  The Pohlig--Hellman approach relies on computing discrete logarithms in subgroups of the given group.  The reduction to subgroups of prime-power order is straightforward, hence we focus primarily on abelian $p$-groups.

If $\gvec{\alpha}$ is a basis for a finite abelian group $G$ of exponent $p^m$ and rank $r$, Teske's generalization of the Pohlig--Hellman algorithm computes $\DL(\gvec{\alpha},\beta)$ using
\begin{equation}\label{equation:TeskeBound}
\TDL(G) = O(m\lg|G|+mp^{r/2})
\end{equation}
group operations \cite[Thm.~6.1]{Teske:PohligHellmanStructure}.\footnote{Teske actually addresses a more general problem, find the minimal nontrivial solution $(\evec{x},y)$ to $\beta^y=\gvec{\alpha}^{\evec{x}}$, which we consider in Section \ref{section:Structure}.  Note that we use $\lg{x} = \log_2{x}$ throughout.}   When $m=1$ this reduces to the $O(|G|^{1/2})$ upper bound mentioned above.  If $p$ and $r$ are small (when computing square roots in finite fields, for example, $r=1$ and $p=2$) the first term dominates and the complexity becomes $O(n^2)$, where $n=\lg|G|$.  For cyclic groups this can be improved to $O(n\lg{n})$ \cite[\S 11.2.3]{Shoup:NumberTheoryAlgebra}, and here we achieve an $O(n\lg{n}/\lg\lg{n})$ bound for arbitrary finite abelian groups when $p$ and $r$ are suitably bounded.
More generally, Algorithm \ref{algorithm:DLp} computes $\DL(\gvec{\alpha},\beta)$ using
\begin{equation}
\TDL(G) = O\left(\frac{\lg(m+1)}{\lg\lg(m+2)}\lg|G|+\frac{\log_p|G|}{r}p^{r/2}\right)
\end{equation}
group operations, improving the dependence on $m$ in both terms of (\ref{equation:TeskeBound}).

Discrete logarithms may be applied to compute the structure of a finite abelian group.  Typically, one uses discrete logarithms to construct a relation matrix, which is then reduced to yield a basis by computing the Smith normal form \cite{Buchmann:BabyGiant,Buchmann:GroupStructure,Teske:GroupStructure}.  We take a simpler (and faster) approach, using our algorithm for discrete logarithms to directly construct a basis.  Given a generating set $S$ for a finite abelian $p$-group $G$ of rank $r$, we give a deterministic generic algorithm to construct a basis using
\begin{equation}\label{equation:TBbound}
\TB(S)=O\left(\lg^{2+\epsilon}|G|+(|S|-r+1)\TDL(G)\right)
\end{equation}
group operations, improving the $O(|S||G|^{1/2})$ result of Buchmann and Schmidt \cite{Buchmann:GroupStructure}.

The bound in (\ref{equation:TBbound}) is minimized when $|S|\approx r$.  If we pick a random subset $S\subset G$, of size $r+O(1)$, then $S$ generates $G$ with very high probability \cite{Pomerance:GeneratingAbelianGroups}.  When combined with an algorithm to compute the group exponent, this yields a generic Monte Carlo algorithm to compute the structure of an arbitrary finite abelian group using $O(|G|^{1/2})$ operations.  When sufficiently tight bounds on the group order are known, this can be converted to a Las Vegas algorithm.

This approach can also be applied to a Sylow $p$-subgroup $H\subset G$.  If the group exponent (or order) is known, the complexity then depends primarily on the size and shape of $H$, not $G$.  This is useful when extracting $p$th roots in $G$, which only requires a basis for $H$ \cite{Sutherland:AbelianRoots}.

\section{Abelian $p$-groups and Young tableaux}\label{section:Tableaux}

We begin by describing a bijection between finite abelian $p$-groups and Young tableaux that motivates our approach and allows us to fix some terminology.

We work in this section and the next with a basis $\gvec{\alpha}=(\alpha_1,\ldots,\alpha_r)$ for an abelian $p$-group $G$ of order $p^n$, exponent $p^m$, and $p$-rank (rank) $r$.  We let $|\alpha_i|=p^{n_i}$ and assume that $m=n_1\ge\cdots\ge n_r\ge 1$.  Up to isomorphism, $G$ is determined by the integer partition $\pi(G)=(n_1,\ldots,n_r)$.  For example, if
\begin{equation}\label{equation:ExampleGroup}
G\cong\Zpk{5}\times\Zpk{3}\times\Zp,
\end{equation}
then $\pi(G)=(5,3,1)$ is a partition of $n=9$ into three parts, with Young diagram:
$$\yng(5,3,1)$$
A cyclic group has $r=1$ and a single row in its diagram, while a group with prime exponent has $m=1$ and a single column.  In our example, $G$ has $r=3$ and $m=5$.

For each $\beta\in G$, we regard $\gvec{x}=\DL(\gvec{\alpha},\beta)$ as an element of the ring
\begin{equation}\label{equation:Ralpha}
R_{\gvec{\alpha}} = R_{\alpha_1}\times\cdots\times R_{\alpha_r} = \Zpk{n_1}\times\cdots\times\Zpk{n_r}.
\end{equation}
The additive group of $R_{\gvec{\alpha}}$ is isomorphic to $G$, via the map $\evec{x}\rightsquigarrow\gvec{\alpha}^{\evec{x}}$ (the inverse map sends $\beta$ to $\DL(\gvec{\alpha},\beta)$).  We may write the components of $\evec{x}\in R_{\gvec{\alpha}}$ in base $p$ as
\begin{equation*}
x_i = \sum_{j=1}^{n_i}p^{n_i-j}x_{i,j},
\end{equation*}
where $x_{i,1}$ is the \emph{most} significant digit (and may be zero).  We can then represent~$\evec{x}$ (and $\beta=\gvec{\alpha}^{\evec{x}}$) by a Young tableau of shape $\pi(G)$ with label $x_{i,j}$ in the $i$th row and $j$th column.  For our example $G$ in (\ref{equation:ExampleGroup}), if $p=2$ and $\evec{x}=(13,5,1)$, we have
\newcommand{\bzero}{{\bf 0}}
\newcommand{\bone}{{\bf 1}}
\begin{equation}\label{example:start}
\young(01101,101,1)
\end{equation}
corresponding to $\beta=\gvec{\alpha}^{\evec{x}}=\alpha_1^{13}\alpha_2^{5}\alpha_3$.

We wish to split the tableau above into left and right halves, allowing us to write 
\begin{equation*}
\evec{x}=\evec{q}\evec{v}+\evec{u}.
\end{equation*}
The vector $\evec{q}$ is a ``shift'' vector whose components are powers of $p$, while $\evec{v}$ and $\evec{u}$ correspond to the left and right halves of $\evec{x}$, respectively. These vectors are obtained by computing discrete logarithms in certain subgroups of $G$, as we now describe.

If we multiply $\evec{x}$ (exponentiate $\beta$) by the integer scalar $p^k$, this shifts the labels of the tableau to the left $k$ places, leaving zeros on the right.  In our example, if $k=2$, we have $4\evec{x}=(20,4,0)$, yielding
\begin{equation}\label{example:shift}
\young(\bone\bzero\bone 00,\bone 00,0)
\end{equation}
(with shifted labels in bold), corresponding to $\beta^{4}=\alpha_1^{20}\alpha_2^4$.  The element $\beta^{p^k}$ lies in the subgroup of $p^k$th powers in $G$,
\begin{equation}\label{equation:Gsubk}
G^{p^k}=\{\beta^{p^k}:\beta\in G\},
\end{equation}
which has a basis\footnote{Our definition of a basis allows $\gvec{\gamma}$ to contain trivial elements.  In practice we may truncate $\gvec{\gamma}$.} $\gvec{\gamma}$ defined by $\gamma_i=\alpha_i^{p^k}$.  The diagram of $G^{p^k}$ corresponds to the $m-k$ rightmost columns in the diagram of $G$.  In our example we have the shape $\pi(G^{4})=(3,1,0)=(3,1)$.  

Now let $\evec{u}=\DL(\gvec{\gamma},\beta^{p^k})$.  The vector $\evec{u}$ is an element of $R_{\gvec{\gamma}}$, but as a vector of integers written in base $p$, each component of $\evec{u}$ contains the low order $m-k$ digits of the corresponding component of $\evec{x}$.  We may ``clear" these digits of $\evec{x}$ to obtain $\evec{z}\in R_{\gvec{\alpha}}$ by subtracting $\evec{u}$ from $\evec{x}$ (in $\mathbb{Z}^r$), to obtain a reduced element of $R_{\gvec{\alpha}}$.  In our example we have $\evec{u}=(5,1,0)$, $\evec{z}=(8,4,1)$ and the tableau 
\begin{equation}\label{example:clear}
\young(\bzero\bone 000,\bone\bzero 0,\bone)
\end{equation}
with the entries unaffected by subtracting $\evec{u}$ from $\evec{x}$ in bold.  The element $\beta\gvec{\alpha}^{-\evec{u}}$ has order at most $p^k$ and lies in the $p^k$-torsion subgroup
\begin{equation}\label{equation:Gsupk}
G[p^k]=\{\beta: \beta^{p^k}=\identity,\thickspace \beta\in G\}.
\end{equation}
A basis $\gvec{\delta}$ for $G[p^k]$ is given by $\delta_i=\alpha_i^{q_i}$, where $q_i=p^{\max(0,n_i-k)}$.  The diagram of $G[p^k]$ corresponds to the $k$ leftmost columns of the diagram of $G$.  In our example, $\pi(G[4])=(2,2,1)$.
If we now let $\evec{v}=\DL(\gvec{\delta},\beta\gvec{\alpha}^{-\evec{u}})$, then $\evec{z}=\evec{q}\evec{v}$ and
\begin{equation*}
\evec{x}=\evec{q}\evec{v}+\evec{u},
\end{equation*}
as desired.  In our example we have $\evec{q}=(8,2,1)$ and $\evec{v}=(1,2,1)$ yielding
\begin{equation*}
(13,5,1) = (8,2,1)(1,2,1)+(5,1,0).
\end{equation*}
This equation effectively reconstructs the tableau in $(\ref{example:start})$ by gluing together the bold portions of the tableaux in $(\ref{example:shift})$ and $(\ref{example:clear})$.

Note that $\evec{u}$ and $\evec{v}$ were defined via discrete logarithms in the subgroups $G^{p^k}$ and $G[p^k]$, respectively.  This suggests a recursive approach, leading to base cases in subgroups corresponding to single columns in the Young diagram of $G$.
%   These base cases may be computed using a standard $O(p^{r/2})$ discrete logarithm computation (baby-steps giant-steps or Pollard rho) or, when $p^r$ is small, by precomputing a lookup table for $G^1$ (each base case will occur in a subgroup of $G^1$).
%We now formalize the discussion above and introduce a number of refinements.

\section{Computing discrete logarithms}

A recursive algorithm along the lines suggested above already yields an improvement over the result of
Teske \cite{Teske:GroupStructure}; in the cyclic case this is equivalent to Shoup's balanced divide-and-conquer version of the Pohlig--Hellman algorithm \cite[11.2.3]{Shoup:NumberTheoryAlgebra}.  We can achieve a further speedup by broadening the recursion tree, allowing us to take advantage of fixed-base exponentiation techniques.  At the same time, we can structure the algorithm to facilitate precomputation, an important practical optimization in applications that rely heavily on discrete logarithms \cite{Bernstein:SquareRoot,Sutherland:AbelianRoots}.

We will need to compute discrete logarithms in various subgroups of the form
\begin{equation}
G(j,k) = \{\beta^{p^j}:\beta^{p^k}=\identity,\thickspace \beta\in G\},
\end{equation}
for nonnegative integers $j<k$.  The subgroup $G(j,k)$ consists of all $p^j$th powers of order at most $p^{k-j}$ and
corresponds to columns $j+1$ through $k$ in the diagram of~$G$.  If $G$ has exponent $p^m$ then $G=G(0,m)$.

We wish to obtain a basis for $G(j,k)$ from our given basis $\gvec{\alpha}=(\alpha_1,\ldots,\alpha_r)$ for~$G$.  To this end, let $n_i=\log_p|\alpha_i|$, let $q_i=p^{j+\max(0,n_i-k)}$, and define
\begin{equation}\label{equation:alphajk}
\evec{q}(j,k)=(q_1,\ldots,q_r)\qquad\text{and}\qquad\gvec{\alpha}(j,k)=\gvec{\alpha}^{\evec{g}(j,k)} = (\alpha^{q_1},\ldots,\alpha^{q_r}).
\end{equation}
%We also define $\gvec{q}(j,k)=(q_1,\ldots,q_r)$.
Then $\gvec{\alpha}(j,k)$ is our desired basis, as we now show.

\begin{lemma}\label{lemma:BasisVector}
Let $\gvec{\alpha}=(\alpha_1,\ldots,\alpha_r)$ be a basis for a finite abelian $G$, and let $j$ and $k$ be nonnegative integers with $j<k$.
Then $\gvec{\alpha}(j,k)$ is a basis for $G(j,k)$.
\end{lemma}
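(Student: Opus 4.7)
The plan is to establish two facts: (i) $\langle\gvec{\alpha}(j,k)\rangle=G(j,k)$, and (ii) every element of this subgroup has a unique expression $\gvec{\alpha}(j,k)^{\evec{z}}$ with $0\le z_i<|\alpha_i^{q_i}|$. Together these give the defining property of a basis.

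For the inclusion $\langle\gvec{\alpha}(j,k)\rangle\subseteq G(j,k)$, each generator factors as $\alpha_i^{q_i}=(\alpha_i^{p^{\max(0,n_i-k)}})^{p^j}$, exhibiting it as a $p^j$th power whose $p^k$th power is trivial (since $\max(0,n_i-k)+k\ge n_i$). For the reverse inclusion, given $\beta\in G(j,k)$, pick $\gamma\in G$ with $\beta=\gamma^{p^j}$ and $\gamma^{p^k}=\identity$. Since $\gvec{\alpha}$ is a basis for $G$, I can write $\gamma=\gvec{\alpha}^{\evec{y}}$; the condition $\gamma^{p^k}=\identity$ then reads $p^{n_i}\mid p^k y_i$, i.e., $p^{\max(0,n_i-k)}\mid y_i$ as integers. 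Setting $y_i=p^{\max(0,n_i-k)}z_i$ and raising to the $p^j$th power yields $\beta=\prod_i\alpha_i^{q_iz_i}=\gvec{\alpha}(j,k)^{\evec{z}}$, so $\beta\in\langle\gvec{\alpha}(j,k)\rangle$.

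Uniqueness would come from a counting argument. A short calculation gives $\log_p|\alpha_i^{q_i}|=\max(0,\min(n_i,k)-j)$, and summing over $i$ this equals the number of cells in columns $j{+}1$ through $k$ of the Young diagram of $G$, which (via the correspondence of Section~\ref{section:Tableaux}) is $\log_p|G(j,k)|$. Hence the $\prod_i|\alpha_i^{q_i}|$ admissible exponent vectors $\evec{z}$ map into a subgroup of exactly the same cardinality, forcing the map to be bijective and giving both halves of the basis property. I do not anticipate a substantive obstacle beyond some bookkeeping with $p$-adic valuations; the one item needing care is the trivial-generator case $n_i\le j$, where $\alpha_i^{q_i}=\identity$ contributes $0$ to both sides of the count — a situation explicitly permitted by the paper's definition of a basis.
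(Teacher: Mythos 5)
Your proof is correct, and its surjectivity half ($\langle\gvec{\alpha}(j,k)\rangle=G(j,k)$) is essentially the paper's: the paper works with $\evec{x}=\DL(\gvec{\alpha},\delta)$ and deduces $q_i\mid x_i$ from the two conditions $p^j\mid x_i$ and $p^{n_i}\mid p^{k-j}x_i$, while you pass through a preimage $\gamma$ of $\beta$ under the $p^j$-power map; these are interchangeable. Where you genuinely diverge is the uniqueness half. The paper argues injectivity directly: a collision $\gvec{\gamma}^{\evec{x}}=\gvec{\gamma}^{\evec{y}}$ with $\evec{x}\ne\evec{y}$ distinct in $R_{\gvec{\gamma}}$ lifts to $\gvec{\alpha}^{\evec{q}\evec{x}}=\gvec{\alpha}^{\evec{q}\evec{y}}$ with $q_ix_i$ and $q_iy_i$ distinct integers below $|\alpha_i|$, contradicting that $\gvec{\alpha}$ is a basis. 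You instead count: surjectivity plus the identity $|G(j,k)|=\prod_i|\alpha_i^{q_i}|$ forces bijectivity. That works, but it shifts the burden onto the cardinality identity, which you attribute to the Young-diagram correspondence of Section~\ref{section:Tableaux} --- a correspondence the paper offers only as motivation, not as a proved fact. To close this you should compute $|G(j,k)|$ independently, e.g.\ by transporting through the isomorphism $R_{\gvec{\alpha}}\to G$ that the basis furnishes: $G(j,k)$ corresponds to $\prod_i q_i\ZZ/p^{n_i}\ZZ$, whose order is $\prod_i p^{\max(0,\min(n_i,k)-j)}=\prod_i|\alpha_i^{q_i}|$. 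This is not circular (it uses only that $\gvec{\alpha}$ is a basis for $G$) and is indeed routine, but it is the step that carries the content of injectivity, whereas the paper's direct argument never needs to know $|G(j,k)|$. Each route has its virtue: yours makes the column-counting picture literal; the paper's is self-contained at the level of exponent vectors.
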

\begin{proof}
Let $\gvec{\gamma} = \gvec{\alpha}(j,k)$.  
We first show that $\gamma$ is a basis for $\langle\gvec{\gamma}\rangle$.
Suppose for the sake of contradiction that $\gvec{\gamma}^{\evec{x}} = \gvec{\gamma}^{\evec{y}}$ with $\evec{x},\evec{y}\in R_{\gvec{\gamma}}$ distinct.  We must have $x_i\ne y_i$ for some $i$, which implies $\gamma_i\ne \identity$ and $q_i|\gamma_i|=|\alpha_i|$.  We also have $\gvec{\alpha}^{\evec{q}(j,k)\evec{x}}=\gvec{\alpha}^{\evec{q}(j,k)\evec{y}}$.  As $q_ix_i$ and $q_iy_i$ are distinct integers less than $|\alpha_i|$, the vectors $\evec{q}(j,k)\evec{x}$ and $\evec{q}(j,q)\evec{y}$ are distinct elements of $R_{\gvec{\alpha}}$.  But this is a contradiction, since $\gvec{\alpha}$ is a basis.

We now prove $\langle\gvec{\gamma}\rangle=G(j,k)$.
Every $\gamma_i$ is a $p^j$th power (since $p^j|q_i$) and has order at most $p^{k-j}$ (since $p^{n_i}|p^{k-j}q_i$); thus $\langle\gvec{\gamma}\rangle\subset G(j,k)$.  Conversely, for $\delta\in G(j,k)$, if $\evec{x}=\DL(\gvec{\alpha},\delta)$, then $p^j|x_i$ and $p^{n_i}|p^{k-j}x_i$ for each $i$.  If $k<n_i$, then $p^{j+n_i-k}|x_i$, so $q_i|x_i$ in every case.  It follows that $\delta\in\langle\gvec{\gamma}\rangle$; hence $G(j,k)\subset \langle\gvec{\gamma}\rangle$.
\end{proof}

We now give a recursive algorithm to compute discrete logarithms in $G(j,k)$, using $\gvec{\alpha}(j,k)$ and $\evec{q}(j,k)$ as defined above.  Note that if $j\le j'<k'\le k$, then each component of $\evec{q}(j,k)$ divides the corresponding component of $\evec{q}(j',k')$, and we may then write $\evec{q}(j',k')/\evec{q}(j,k)$ to denote point-wise division.  For convenience, let $\DL_{\gvec{\alpha}}(j,k,\beta)$ denote $\DL(\gvec{\alpha}(j,k),\beta)$.  We assume the availability of a standard algorithm for computing discrete logarithms in the base cases, as discussed below.

\begin{algorithm}\label{algorithm:DLp}
Given a basis $\gvec{\alpha}$ for a finite abelian $p$-group $G$ and $t\in\Z_{>0}$, the following algorithm computes $\DL_{\gvec{\alpha}}(j,k,\beta)$ for integers $0\le j < k$ and $\beta\in G(j,k)$:
\end{algorithm}
\renewcommand\labelenumi{\theenumi.}
\begin{enumerate}
\item
If $k-j\le t$, compute $\evec{x}\leftarrow\DL_{\gvec{\alpha}}(j,k,\beta)$ as a base case and return $\evec{x}$.
\vspace{4pt}
\item
Choose integers $j_1,\ldots,j_w$ satisfying $j=j_1<j_2<\cdots<j_w<j_{w+1}=k$.
\vspace{4pt}
\item
Compute $\gamma_i=\beta^{p^{{j_i}-j}}$ for $i$ from 1 to $w$, and set $\evec{x}\leftarrow 0$.
\vspace{4pt}
\item
For $i$ from $w$ down to 1:
\renewcommand\labelenumii{\theenumii.}
\begin{enumerate}
\item
\vspace{4pt}
Recursively compute $\evec{v}\leftarrow \DL_{\gvec{\alpha}}(j_i,j_{i+1},\gamma_i\gvec{\alpha}(j_i,k)^{-\evec{x}})$.
\item
\vspace{4pt}
Set $\evec{x}\leftarrow \evec{s}\evec{v} + \evec{x}$, where $\evec{s}=\evec{q}(j_i,j_{i+1})/\evec{q}(j_i,k)$.
\end{enumerate}
\vspace{4pt}
\item 
Return $\evec{x}$.
\end{enumerate}

\begin{example}
Let $G$ be cyclic of order $p^{19}$ with basis $\alpha$, $j=6$ and $k=13$.  Then $q(6,13)=p^{6+\max(0,19-13)}=p^{12}$ and $\alpha^{p^{12}}$ is a basis for $G(6,13)$.  Let $j_2=8$ and $j_3=11$, so that $(6,13]$ is partitioned into subintervals $(6,8]$, $(8,11]$, and $(11,13]$.  We then have $q(11,13)=p^{17}$, $q(8,11)=p^{16}$, $q(6,8)=p^{17}$, and also $q(8,13)=p^{14}$.
For $\beta\in G(6,13)$, Algorithm \ref{algorithm:DLp} computes
\begin{tabbing}
\hspace{50pt}\=$v_3 = \DL(\alpha^{p^{17}},\beta^{p^5}),$\hspace{80pt}\=$x_3 = v_3,$\\
\>$v_2 = \DL(\alpha^{p^{16}},\beta^{p^2}\alpha^{-p^{14}x_3}),$\>$x_2 = p^2v_2+v_3,$\\
\>$v_1 = \DL(\alpha^{p^{17}},\beta\alpha^{-p^{12}x_2}),$\>$x_1 = p^5v_1+p^2v_2+v_3.$
\end{tabbing}
The final value $x=x_1$ contains 7 base $p$ digits: 2 in $v_1$, 3 in $v_2$, and 2 in $v_3$.
\end{example}
\begin{example}
Suppose instead that $G$ is cyclic of order $p^{9}$, but keep the other parameters as above.
We then have $q(6,13)=p^6$, $q(11,13)=p^{11}$, $q(8,11)=p^8$, $q(6,8)=p^7$, and $q(8,13)=p^8$.
For $\beta\in G(6,13)$, the algorithm now computes
\begin{tabbing}
\hspace{50pt}\=$v_3 = \DL(\identity,\identity),$\hspace{92pt}\=$x_3 = v_3 = 0,$\\
\>$v_2 = \DL(\alpha^{p^{8}},\beta^{p^2}\alpha^{-p^8x_3}),$\>$x_2 = v_2,$\\
\>$v_1 = \DL(\alpha^{p^{7}},\beta\alpha^{-p^6x_2}),$\>$x_1 = pv_1+v_2.$
\end{tabbing}
The computation of $x_3$ requires no group operations; the algorithm can determine $\alpha(11,13)=\identity$ from the fact that $11\ge 9$ (since $|\alpha|=p^9$ is given).  The final value $x=x_1$ contains 3 base $p$ digits: 2 in $v_1$ and 1 in $v_2$.
\end{example}

These examples illustrate the general situation; we compute discrete logarithms in $r$ cyclic groups in parallel.  The second example is contrived, but it shows what happens when a cyclic factor of $G$ has order less than $p^k$.

We assume that no cost is incurred by trivial operations (those involving the identity element).  As a practical optimization, the loop in step 4 may begin with the largest $i$ for which $\gamma_i\ne \identity$ (it will compute $\evec{x}=0$ up to this point in any event).

The correctness of Algorithm \ref{algorithm:DLp} follows inductively from the lemma below.

\begin{lemma}\label{lemma:DLpCorrectness}
Let $\gvec{\alpha}$ be a basis for a finite abelian $p$-group $G$ and let $j,j',k',$ and $k$ be integers with $0\le j\le j' < k' \le k$.  For all $\beta\in G(j,k)$ the following hold:
\renewcommand\labelenumi{(\roman{enumi})}
\begin{enumerate}
\item
If $\evec{x}=\DL_{\gvec{\alpha}}(k',k,\beta^{p^{k'-j}})$ and $\gamma=\beta^{p^{j'-j}}\gvec{\alpha}(j',k)^{-\evec{x}}$, then $\gamma\in G(j',k')$.
\vspace{4pt}
\item
If we also have $\evec{v}=\DL_{\gvec{\alpha}}(j',k',\gamma)$ and $\evec{s}=\evec{q}(j',k')/\evec{q}(j',k)$,\\then $\evec{s}\evec{v} + \evec{x}=\DL_{\gvec{\alpha}}(j',k,\beta^{p^{j'-j}})$.
\end{enumerate}
\end{lemma}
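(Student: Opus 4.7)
The plan is to reduce both parts to two ``exponent-shift'' identities for the bases $\gvec{\alpha}(j,k)$ defined in (\ref{equation:alphajk}). Writing $q_i(j',k)=p^{j'+\max(0,n_i-k)}$ for the components of $\evec{q}(j',k)$, the identities I will need are
\begin{equation*}
\gvec{\alpha}(j',k)^{p^{k'-j'}} = \gvec{\alpha}(k',k)\qquad\text{and}\qquad \gvec{\alpha}(j',k)^{\evec{s}\evec{v}} = \gvec{\alpha}(j',k')^{\evec{v}},
\end{equation*}
both of which reduce to one-line checks on exponents: $p^{k'-j'}q_i(j',k)=q_i(k',k)$ in the first case, and $s_iq_i(j',k)=q_i(j',k')$ in the second, which is literally the definition of $s_i=q_i(j',k')/q_i(j',k)$. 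The fact that each $s_i$ is an integer is the divisibility noted in the paragraph preceding Algorithm~\ref{algorithm:DLp}, and follows from $\max(0,n_i-k)\le\max(0,n_i-k')$.

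For part (i), I first verify that $\beta^{p^{k'-j}}$ lies in $G(k',k)$, so that $\evec{x}$ is well-defined: writing $\beta=\delta^{p^j}$ with $\delta^{p^k}=\identity$, we have $\beta^{p^{k'-j}}=\delta^{p^{k'}}$, of order dividing $p^{k-k'}$. The element $\gamma=\beta^{p^{j'-j}}\gvec{\alpha}(j',k)^{-\evec{x}}$ is a $p^{j'}$th power, since $\beta^{p^{j'-j}}=\delta^{p^{j'}}$ and each component of $\gvec{\alpha}(j',k)$ is one by Lemma~\ref{lemma:BasisVector}. To conclude $\gamma\in G(j',k')$ it remains to check $\gamma^{p^{k'-j'}}=\identity$, and this follows directly from the first identity: $\gamma^{p^{k'-j'}}=\beta^{p^{k'-j}}\gvec{\alpha}(k',k)^{-\evec{x}}=\identity$ by the defining relation $\gvec{\alpha}(k',k)^{\evec{x}}=\beta^{p^{k'-j}}$.

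For part (ii), the goal is to show $\gvec{\alpha}(j',k)^{\evec{s}\evec{v}+\evec{x}}=\beta^{p^{j'-j}}$. The second identity gives $\gvec{\alpha}(j',k)^{\evec{s}\evec{v}}=\gvec{\alpha}(j',k')^{\evec{v}}=\gamma$, so multiplying by $\gvec{\alpha}(j',k)^{\evec{x}}$ and substituting the definition $\gamma\gvec{\alpha}(j',k)^{\evec{x}}=\beta^{p^{j'-j}}$ finishes the job. The arithmetic here is routine; I expect the only real obstacle to be bookkeeping, namely keeping the three bases $\gvec{\alpha}(j',k)$, $\gvec{\alpha}(j',k')$, $\gvec{\alpha}(k',k)$ straight and verifying that the relevant $q$-value divisibilities go in the correct direction so that all shift exponents are nonnegative integers.
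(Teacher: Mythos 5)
Your proposal is correct and follows essentially the same route as the paper's proof: the same exponent identity $p^{k'-j'}q_i(j',k)=q_i(k',k)$ (i.e., $j'+\max(0,n_i-k)+k'-j'=k'+\max(0,n_i-k)$) handles part (i), and the same chain $\gvec{\alpha}(j',k)^{\evec{s}\evec{v}}=\gvec{\alpha}^{\evec{q}(j',k')\evec{v}}=\gvec{\alpha}(j',k')^{\evec{v}}=\gamma$ handles part (ii). Your extra check that $\beta^{p^{k'-j}}\in G(k',k)$, so that $\evec{x}$ is well-defined, is a small useful addition the paper leaves implicit.
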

\begin{proof}
For (i), note that $\beta$ is a $p^j$th power, so $\beta^{p^{j'-j}}$ is a $p^{j'}$th power, and every element of $\langle\gvec{\alpha}(j',k)\rangle$ is a $p^{j'}$th power, hence $\gamma$ is a $p^{j'}$th power.  We also have
\begin{equation*}
\gamma^{p^{k'-j'}} = \left(\beta^{p^{j'-j}}\gvec{\alpha}(j',k)^{-\evec{x}}\right)^{p^{k'-j'}}
= \beta^{p^{k'-j}}\gvec{\alpha}(j',k)^{-p^{k'-j'}\evec{x}}.
\end{equation*}
It follows from the definition in (\ref{equation:alphajk}) that $\gvec{\alpha}(j',k)^{-p^{k'-j'}\evec{x}}=\gvec{\alpha}(k',k)^{-\evec{x}}$, since we have $j'+\max(0,n_i-k) + k' - j'= k' + \max(0,n_i-k)$.  We then obtain
$$\gamma^{p^{k'-j'}}=\beta^{p^{k'-j}}\gvec{\alpha}(k',k)^{-\evec{x}} = \beta^{p^{k'-j}}(\beta^{p^{k'-j}})^{-1} = \identity.$$
Thus $\gamma$ has order at most $p^{k'-j'}$, and therefore $\gamma\in G(j',k')$, proving (i).

Note that $k'<k$ implies $\max(0,n_i-k')\ge\max(0,n_i-k)$, so $\evec{q}(j',k')$ is divisible (component-wise) by $\evec{q}(j',k)$, and $\evec{s}$ in (ii) is well defined.  Now
\begin{equation*}
\gvec{\alpha}(j',k)^{\evec{s}\evec{v}}
= \gvec{\alpha}^{\evec{q}(j',k)\evec{s}\evec{v}} = \gvec{\alpha}^{\evec{q}(j',k')\evec{v}}
= \gvec{\alpha}(j',k')^{\evec{v}} = \gamma = \beta^{p^{j'-j}}\gvec{\alpha}(j',k)^{-\evec{x}},
\end{equation*}
and therefore $\gvec{\alpha}(j',k)^{\evec{s}\evec{v}+\evec{x}}=\beta^{p^{j'-j}}$, proving (ii).
\end{proof}

We now consider the parameter $t$ in Algorithm \ref{algorithm:DLp}.  If $p^r$ is small, we precompute a lookup table for $G(0,t)$, containing at most $p^{rt}$ group elements, for some suitable value of $t$.  This will handle all the base cases, since they arise in subgroups $G(j,k)$ of $G(0,t)$, where $k-j\le t$.  This is especially effective when one can amortize the cost over many discrete logarithm computations, in which case a larger $t$ is beneficial.  In applications where $p^r=O(1)$, one typically chooses $t$ to be logarithmic in the relevant problem size (which may be larger than $|G|$).

When $p^r$ is large, we instead set $t=1$ and use a standard $O(\sqrt{N})$ algorithm for computing discrete logarithms in finite abelian groups.  A space-efficient algorithm derived from Pollard's rho method is given in \cite{Teske:GroupStructure}, and a baby-steps giant-steps variant can be found in \cite[Alg.~9.3]{Sutherland:Thesis} (see Section \ref{section:Performance} for optimizations).

When partitioning the interval $(j,k]$ into subintervals in step 2, we assume that the subintervals are of approximately equal size, as determined by the choice of $w$.  The choice $w=k-j$ limits the recursion depth to 1 and corresponds to the standard Pohlig--Hellman algorithm.  The choice $w=2$ yields a balanced binary recursion tree.  This might appear to be an optimal choice, but we can actually do better with a somewhat larger choice of $w$, using fixed-base exponentiation techniques.

We recall a theorem of Yao.
\begin{theorem}[Yao\footnote{Pippenger gives a better bound for large $w$, but not necessarily an online algorithm \cite{Bernstein:Pippenger,Pippenger:PowersPrelim}.}]\label{theorem:Yao}
There is an online algorithm that, given $\gamma\in G$ and any input sequence of positive integers $e_1,\ldots,e_w$, outputs $\gamma^{e_1},\ldots,\gamma^{e_w}$ using at most
$$\lg{E} + c\sum_{i=1}^w\left\lceil\frac{\lg{e_i}}{\lg\lg(e_i+2)}\right\rceil$$
multiplications, where $E=\max_i\{e_i\}$ and $c\le 2$ is a constant.\footnote{As $E\to\infty$ the constant $c$ can be made arbitrarily close to 1.}
\end{theorem}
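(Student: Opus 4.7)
The plan is to prove Yao's bound via base-$b$ exponentiation combined with Yao's digit-grouping trick, choosing the radix $b_i$ adaptively per input and paying the $\lg E$ startup cost exactly once by lazily extending a table of repeated squares. Specifically, I would maintain a table of $\gamma^{2^j}$ whose depth grows by a single squaring whenever an incoming $e_i$ exceeds the currently supported bound; summed over all inputs, this precomputation costs at most $\lceil\lg E\rceil$ multiplications, accounting for the first term of the stated bound.

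For each input $e_i$ I would pick a radix $b_i = 2^{k_i}$ with $k_i\approx\lg\lg(e_i+2)$, so that $b_i$ has order $\lg(e_i+2)/\lg\lg(e_i+2)$, and expand $e_i = \sum_{j=0}^{L_i-1} d_{i,j} b_i^j$ in base $b_i$ with $L_i=\lceil\lg(e_i+1)/\lg b_i\rceil$. Each needed power $\gamma^{b_i^j}=\gamma^{2^{jk_i}}$ already sits in the precomputed table, so no extra squarings are required. Yao's trick then proceeds in two stages: for every nonzero digit value $d\in\{1,\ldots,b_i-1\}$, form
\begin{equation*}
A_{i,d} \;=\; \prod_{j:\, d_{i,j}=d} \gamma^{b_i^j},
\end{equation*}
at a total cost of at most $L_i-1$ multiplications (one per nonzero digit, minus one initialization per distinct digit value), then combine through the telescoping identity
\begin{equation*}
\prod_{d=1}^{b_i-1} A_{i,d}^{d} \;=\; \prod_{k=1}^{b_i-1}\Bigl(\prod_{d=k}^{b_i-1} A_{i,d}\Bigr),
\end{equation*}
evaluated by a single Horner-style running product in at most $2(b_i-2)$ further multiplications. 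The per-input cost is therefore $L_i + 2b_i + O(1)$.

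The main obstacle is pinning down the leading constant $c$. A direct calculus optimization of $L_i + 2b_i = \lg e_i/\lg b_i + 2b_i$ shows the minimum is attained near $b_i\approx \lg e_i/(\lg\lg e_i)^2$, at which point $2b_i$ is of strictly smaller order than $L_i\lg b_i\approx \lg e_i$; consequently $L_i$ alone contributes a coefficient tending to $1$ in front of $\lg e_i/\lg\lg(e_i+2)$, while $2b_i$ is absorbed into a lower-order term, yielding $c\to 1$ as the footnote asserts. For the uniform bound $c\le 2$ one checks that over all $e_i\ge 1$ the quantity $L_i + 2b_i$ never exceeds $2\lceil\lg e_i/\lg\lg(e_i+2)\rceil$ for a suitable fixed schedule of $b_i$; this is a routine but finicky case analysis, where the denominator shift from $\lg\lg e_i$ to $\lg\lg(e_i+2)$ is precisely what keeps the bound meaningful for small $e_i\in\{1,2\}$, and the online bookkeeping stays honest provided each table-extending squaring is charged only to the first input that forced it.
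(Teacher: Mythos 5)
This theorem is stated in the paper as an imported result of Yao, with references to \cite{Bernstein:Pippenger,BGMW:FastExponentiation,Gordon:Survey,Lim:Exponentiation,Menezes:Handbook} for algorithms achieving the bound; the paper contains no proof of it, so there is nothing internal to compare against. Your skeleton is the standard and correct one behind those references: a lazily extended table of $\gamma^{2^j}$ shared across all $w$ inputs (accounting for the single $\lg E$ term and keeping the algorithm online), radix $b_i=2^{k_i}$ so the powers $\gamma^{b_i^j}$ come from the table for free, Yao's grouping of digit positions by digit value into the $A_{i,d}$, and the telescoping evaluation of $\prod_d A_{i,d}^d$ in $2(b_i-2)$ multiplications. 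The per-input cost $L_i+2b_i+O(1)$ and the conclusion $c\to 1$ for large $e_i$ are both right.

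The genuine gap is the uniform constant $c\le 2$, which is the only part of the statement with real content beyond the asymptotic version, and which you defer to ``a routine but finicky case analysis.'' It is not routine with the parameter schedule you propose. The calculus-optimal radix $b_i\approx \lg e_i/(\lg\lg e_i)^2$ is generally not a power of two, and for moderate exponents the available powers of two leave no slack. Concretely, take $w=1$ and $e_1=2^{32}-1$: the stated budget is $\lg E+2\lceil \lg e_1/\lg\lg(e_1+2)\rceil<32+2\cdot 7=46$, i.e.\ at most $45$ multiplications. With radix $4$ the base-$4$ expansion is sixteen digits equal to $3$, so your algorithm performs $30$ squarings for the table, $15$ multiplications to form $A_3$, and $2$ more for $A_3^3$, totaling $47$; radix $8$ gives $46$, and radix $2$ is far worse. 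So the schedule $b_i=2^{k_i}$ with $k_i\approx\lg\lg(e_i+2)$ (or any nearby power of two) does not deliver $c\le 2$ on this input, and a correct proof must do something further --- e.g.\ exploit that the table only needs depth $k_i(L_i-1)\le\lg E$, cap the combination cost when few distinct digit values occur, or switch to a different windowing for certain ranges of $e_i$. As written, the proposal establishes $\lg E+(1+o(1))\sum\lg e_i/\lg\lg(e_i+2)$ but not the explicit uniform bound claimed in the theorem.
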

The online algorithm in the theorem outputs $\gamma^{e_i}$ before receiving the input $e_{i+1}$. If we set $n=\lg{E}$, Yao's Theorem tells us that, provided $w=O(\lg{n})$, we can perform $w$ exponentiations of a common base with $n$-bit exponents using just $O(n)$ multiplications, the same bound as when $w=1$.  There are several algorithms that achieve Yao's bound \cite{Bernstein:Pippenger,BGMW:FastExponentiation,Gordon:Survey,Lim:Exponentiation,Menezes:Handbook}, and they typically require storage for $O(n/\lg{n})$ group elements.

Consider the execution of Algorithm 1 computing $\DL(\gvec{\alpha},\beta)=\DL(0,m,\beta)$.    It will be convenient to label the levels of the recursion tree with $\ell=0$ at the bottom and $\ell=d$ at the top, where $d$ is the maximum depth of the recursion.  At each level the interval $(0,m]$ is partitioned into successively smaller subintervals.  We let $s_d=m$ denote the size of the initial interval at level $d$, and at level $\ell$ we partition each interval into approximately $w_\ell$ subintervals of maximum size $s_{\ell-1}= \left\lceil s_\ell/w_\ell\right\rceil$
and minimum size $\left\lfloor s_\ell/w_\ell \right\rfloor$.

\vspace{18pt}
\begin{centering}
\Tree [.100 [.15 [.3 1 1 1 ].3 {\ldots} [.3 1 1 1 ].3 ].15 {\ldots} [.14 [.3 1 1 1 ].3 {\ldots} [.2 1 1 ].2 ].14 ]
\end{centering}
\vspace{18pt}

In the tree above we start at level $\ell=3$ with $s_3=m=100$ and $w_3=7$, partitioning 100 into two subintervals of size $s_2=15$ and five subintervals of size~14.  We then have $w_2=5$ and $s_1=3$, and finally $w_1=3$ and $s_0=1$.  The base cases are all at level 0 in this example, but in general may also occur at level 1.  The fan-out of each node at level $\ell$ is $w_\ell$, except possibly at level 1 (in this example, we cannot partition 2 into three parts).

Our strategy is to choose $w_\ell \approx \min(\lg(s_\ell\lg{p}),s_\ell)$ and apply Yao's Theorem to bound the cost at each level of the recursion tree by $O(\lg|G|)$ group operations, not including the base cases.
The standard Pohlig--Hellman approach reduces the problem to base cases in one level, potentially incurring a cost of $O(\lg^2|G|)$ to do so. A binary recursion uses $O(\lg|G|)$ group operations at each level, but requires $\Omega(\lg{m})$ levels, while we only need $O(\lg{m}/\lg\lg{m})$.

%Note that in general $G$ is not cyclic, but we may consider the cost of Algorithm \ref{algorithm:DLp} associated with each cyclic factor generated by some $\alpha_i$ in our basis.  A nice feature of the algorithm is that the cost associated with $\alpha_i$ depends essentially only on $|\alpha_i|$, not on the exponent $p^m$ of $G$.  This is because the relevant component $x_i$ of $\evec{x}$ in step 4 may be zero for much of the computation when $|\alpha_i|<p^m$.

With these ideas in mind, we now prove an absolute bound on the running time of Algorithm \ref{algorithm:DLp}.  An asymptotic bound appears in the corollary that follows.

\begin{proposition}\label{proposition:DLComplexity}
Let $\gvec{\alpha}=(\alpha_1,\ldots,\alpha_r)$ be a basis for a finite abelian $p$-group $G$ with rank $r$ and exponent $p^m$.  Set $n_i=\log_p|\alpha_i|$, and let $r_j$ be the rank of the subgroup of $p^j$th powers in $G$.  There is a generic algorithm to compute $\DL(\gvec{\alpha},\beta)$ using
$$\TDL(G) \le c\left(\sum_{i=1}^r\frac{\lg(n_i+1)}{\lg\lg(n_i+2)}\lg|\alpha_i|+\sum_{j=0}^{m-1} p^{r_j/2}\right)$$
group operations, where $c$ is an absolute constant independent of $G$.
\end{proposition}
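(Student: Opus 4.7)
For the second summand, I would set $t=1$ in Algorithm~\ref{algorithm:DLp} so that base cases arise in the elementary abelian subgroups $G(j,j+1)$ of order $p^{r_j}$ for $0\le j<m$. Running a standard $O(\sqrt{N})$ DL algorithm for arbitrary finite abelian groups (e.g.\ \cite[Alg.~9.3]{Sutherland:Thesis}) on each one uses $O(p^{r_j/2})$ group operations, and summing yields $\sum_{j=0}^{m-1}p^{r_j/2}$.

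For the first summand, I would analyze the recursion tree described before Theorem~\ref{theorem:Yao}, setting $s_d=m$ and at level~$\ell$ choosing fan-out $w_\ell=\min(\lceil\lg(s_\ell\lg p)\rceil,s_\ell)$ and interval size $s_{\ell-1}=\lceil s_\ell/w_\ell\rceil$. A short induction confirms $s_0\le 1$ (so every base case corresponds to a single column) and total depth $d=O(\lg(m+1)/\lg\lg(m+2))$. At a single internal node at level~$\ell$, the non-base-case work has two sources: Step~3 raises $\beta$ to the $w_\ell$ powers $p^{j_i-j}$, and the iterations of Step~4 together raise each cyclic base $\alpha_l$ to $w_\ell$ scalars (since $\alpha_l(j_i,k)$ is itself a power of $\alpha_l$, the computations $\gvec{\alpha}(j_i,k)^{-\evec{x}}$ form, component by component, a common-base exponentiation of $\alpha_l$). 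Applying Theorem~\ref{theorem:Yao} with the choice $w_\ell\le\lceil\lg\lg E\rceil$ collapses the Yao sum to $O(\lg E)$, so the cost per nontrivial component per node is $O(\lg E)$, where $E$ is the maximum $\alpha_l$-exponent involved.

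Summing node-by-node across level~$\ell$: for component~$l$ with $n_l$ bits, nontrivial contributions come only from the $O(\lceil n_l/s_\ell\rceil)$ nodes whose interval meets $[0,n_l]$, each contributing $O(\min(s_\ell,n_l)\lg p)$ group operations, so level~$\ell$ contributes $O(\lg|\alpha_l|)$ in total to component~$l$. The main obstacle is that a crude sum over all $d$ levels yields $O(d\cdot\lg|\alpha_l|)$, inflating the bound by roughly $d/\!\lg(n_l+1)$ on short components. Resolving this requires a sharper accounting at levels with $s_\ell>n_l$: since $\alpha_l(j_i,k)$ is trivial whenever $j_i\ge n_l$, the number of nontrivial $\alpha_l$-exponentiations at the single relevant node is only $O(\lceil n_l/s_{\ell-1}\rceil)$, and the maximum $\alpha_l$-exponent is bounded by $|\alpha_l|$ rather than $p^{s_\ell}$. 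Combining these sharpenings with Theorem~\ref{theorem:Yao} shows that the per-component contribution at the high levels decays geometrically, so the effective number of ``full-cost'' levels is $O(\lg(n_l+1)/\lg\lg(n_l+2))$, yielding the per-component bound $O(\lg(n_l+1)/\lg\lg(n_l+2)\cdot\lg|\alpha_l|)$. Summing over $l$ completes the first term.
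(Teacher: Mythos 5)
Your overall architecture matches the paper's proof: $t=1$ so the base cases land in the column subgroups $G(j,j+1)$ of order $p^{r_j}$, giving the second sum; a recursion tree of depth $d=O(\lg(m+1)/\lg\lg(m+2))$ with fan-out $w_\ell\approx\lg\lg E$ so that Yao's Theorem collapses the per-node exponentiation cost to $O(\lg E)$; and a per-component, per-level accounting that gives $O(\lg|\alpha_l|)$ at each level where the component is ``active.'' You also correctly identify the crux: a naive sum over all $d$ levels gives $O(d\,\lg|\alpha_l|)$ per component, which proves only the uniform bound $O\bigl(\tfrac{\lg(m+1)}{\lg\lg(m+2)}\lg|G|\bigr)$ of Corollary~\ref{corollary:DLComplexity}, not the per-component refinement $\tfrac{\lg(n_l+1)}{\lg\lg(n_l+2)}\lg|\alpha_l|$ claimed in the Proposition.

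The gap is in your resolution of that crux. At a level $\ell$ with $s_{\ell-1}\ge n_l$, your two sharpenings (only $O(\lceil n_l/s_{\ell-1}\rceil)=O(1)$ exponentiations of $\alpha_l$ at the single relevant node, with exponent bounded by $|\alpha_l|$) still leave a cost of order $\lg E=\lg|\alpha_l|$ at \emph{every} such level, coming from the unavoidable $\lg E$ term in Yao's bound for even one exponentiation; nothing in your accounting decays geometrically. (Test case: $G\cong\Zpk{m}\times(\Zp)^{r-1}$ with $m$ large --- your bookkeeping charges the short components $\Theta(d\lg p)$ each rather than $O(\lg p)$.) The observation the paper uses, and which you are missing, is that at every level with $s_{\ell-1}\ge n_l$ the accumulated component $x_l$ is still identically zero: the digits of $x_l$ all live in columns $\le n_l$, and at such a level every subinterval to the right of the leftmost one lies entirely beyond column $n_l$, so the recursive calls there return $v_l=0$. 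Hence the exponentiations of $\alpha_l$ at those levels are trivial ($\alpha_l^{0}$) and cost nothing, and component $l$ pays only at the bottom $d(n_l)=O(\lg(n_l+1)/\lg\lg(n_l+2))$ levels; the lower bound $s_\ell\ge\ell!$ is what converts the depth count into that expression. Separately, your choice $w_\ell=\min(\lceil\lg(s_\ell\lg p)\rceil,s_\ell)$ can equal $1$ (e.g.\ $p=2$, $s_\ell=2$), stalling the recursion with $s_{\ell-1}=s_\ell$; the paper sidesteps this by taking $w_\ell=\lceil\lg(2s_\ell)\rceil\ge 2$ in the proof.
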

When a probabilistic algorithm is used for the base cases (such as the rho method), the algorithm in the proposition is probabilistic and $\TDL(G)$ refers to the expected running time, but otherwise the algorithm is deterministic.
The bound on $\TDL(G)$ depends only on the structure of $G$, not the basis $\gvec{\alpha}$.

\begin{proof}[Proof of Proposition \ref{proposition:DLComplexity}]
We use Algorithm \ref{algorithm:DLp} to compute $\DL(\gvec{\alpha},\beta)=\DL_{\gvec{\alpha}}(0,m,\beta)$ using $t=1$.
As discussed above, we label the levels of the recursion tree with $\ell=0$ at the base and $\ell=d$ at the root.
We let $s_\ell$ denote the size of the first interval at level $\ell$, and we assume that all others have
size at least $s_\ell-1$ and at most~$s_\ell$.
Thus $s_d=m$, and we recursively define $s_{\ell-1}=\lceil s_\ell/w_\ell\rceil$ down to $s_0=1$.
To simplify the proof we use $w_\ell=\lceil \lg (2s_\ell)\rceil$ (independent of $p$) and assume $n_1=m$.

There is a base case in $G(j,j+1)$ for each $0\le j < m$, with $|G(j,j+1)|=p^{r_j}$.
Applying either of the standard $O(\sqrt{N})$ discrete logarithm algorithms to the base cases yields the second sum in the bound for $\TDL(G)$, with $c\approx 2$.

At level $\ell$ of the recursion tree, the total cost of step 3 is bounded by
\begin{equation*}
T_1(\ell)< m(2\lg{p})=2\lg|\alpha_1|,
\end{equation*}
since $m$ exponentiations by $p$ are required.  The total cost of the multiplications by~$\gamma_i$ in step 4a is bounded by $T_2(\ell) = m\le \lg|\alpha_1|$.

All other group operations occur in exponentiations in step 4a.  These are of the form $\gvec{\alpha}(j',k)^{-\evec{x}}$, where $j\le j'<k$.
To bound their cost, we consider the cost $T(\alpha_i,\ell)$ associated to a particular $\alpha_i$ at level $\ell$.
The exponentiation of $\alpha_i$ is nontrivial only when $j'<n_i$ and $x_i\ne 0$.
Thus to bound $T(\alpha_i,\ell)$, we only count exponentiations with $j'<n_i$ and only consider levels $\ell$ of the recursion tree with $s_{\ell-1} < n_i$, since at all higher levels $x_i$ will still be zero.

In the recursive call to compute $\DL_{\gvec{\alpha}}(j,k,\beta)$, we may compute $\gvec{\alpha}(j',k)^{-\evec{x}}$ using fixed bases $\alpha_i^{-q_i}$, where $\evec{q}=\evec{q}(j,k)$ as in (\ref{equation:alphajk}), since $\evec{q}(j,k)$ divides $\evec{q}(j',k)$ for all $j'\ge j$.  For each $\alpha_i$ we can precompute all the $\alpha_i^{-q_i}$ for a cost of
\begin{equation*}
T_0(\alpha_i)< 2n_i\lg{p} = 2\lg|\alpha_i|.
\end{equation*}
At level $\ell > 0$ with $s_{\ell-1} < n_i$, there are $\lceil n_i/s_l \rceil$ instances of up to $w_\ell-1$ nontrivial exponentiations involving $\alpha_i$.
These are computed using the common base $\alpha_i^{-q_i}$, with exponents bounded by $E=\min(p^{s_\ell},|\alpha_i|)$.
Applying Yao's Theorem,
\begin{equation*}
T(\alpha_i,\ell)\le \lceil n_i/s_l \rceil\left(\lg{E} + 2(w_\ell-1)\left\lceil\frac{\lg{E}}{\lg\lg(E+2)}\right\rceil\right).
\end{equation*}
If $s_\ell > n_i$, we replace $\lceil n_i/s_\ell\rceil$ by 1 and $\lg E$ by $\lg |\alpha_i|$; otherwise we replace $\lg E$ by $s_\ell\lg p$.
We then apply $\lceil z\rceil\le 2z$ (for $z\ge 1$) to remove both ceilings and obtain
\begin{equation}\label{equation:Tibnd1}
T(\alpha_i,\ell) \le 2\lg|\alpha_i| + 8\lg|\alpha_i|\left(\frac{w_\ell-1}{\lg\lg(E+2)}\right).
\end{equation}
If $E=p^{s_\ell}$, then $(w_\ell-1)/\lg\lg(E+2) < 2$.  Otherwise $E=|\alpha_i|$, and then
$$\lg(E+2) > n_i \ge s_{\ell-1} + 1 = \lceil s_\ell /w_\ell \rceil + 1 \ge s_\ell / (\lg s_\ell+2) + 1,$$
which implies $(w_\ell-1)/\lg\lg(E+2) < 3$.  This yields $T(\alpha_i,\ell) < 26\lg|\alpha_i|$.

It remains to bound the number of levels $\ell > 0$ with $s_{\ell-1} < n_i$.
This is equal to the least $\ell$ for which $s_\ell \ge n_i$, which we denote $d(n_i)$.
We derive an upper bound on $d(n_i)$ as a function of $n_i$ by proving a lower bound on $s_\ell$ as a function of~$\ell$.

We recall that $s_0=1$ and for $\ell>0$ we have $w_\ell=\lceil\lg(2s_\ell)\rceil\ge 2$ and $s_\ell \ge 2s_{\ell-1}$.
This implies $w_\ell\ge l+1$ and $s_\ell\ge \ell s_{\ell-1} \ge \ell!$ for all $\ell >0$.
Stirling's formula yields a lower bound on $s_\ell$, from which one obtains the upper bound
\begin{equation}\label{equation:dni}
d(n_i)\le \frac{2\lg(n_i+1)}{\lg\lg(n_i+2)},
\end{equation}
valid for $n_i\ge 14$.  The lexicographically minimal sequence of integers satisfying $s_0=1$ and $s_{\ell-1} = \lceil s_\ell/\lceil\lg(2s_\ell)\rceil\rceil$ for all $\ell > 0$ begins $1,2,4,16,121,1441,\ldots$, and one finds that $s_1,\ldots,s_{13}$ satisfy (\ref{equation:dni}), with $\ell=n_i$ and $s_\ell=d(n_i)$.

The total cost of all computations outside of the base cases is then bounded by
\begin{equation*}
\sum_{\ell=1}^d \Bigl(T_1(\ell)+T_2(\ell)\Bigr) + \sum_{i=1}^r\left(T_0(\alpha_i) + \sum_{\ell=1}^{d(n_i)} T(\alpha_i,\ell)\right)\medspace\le\medspace c\sum_{i=1}^r\frac{\lg(n_i+1)}{\lg\lg(n_i+2)}\lg|\alpha_i|,
\end{equation*}
where we use $d=d(n_1)$, and the constant $c<57$.  This yields the first sum in the bound for $\TDL(G)$ and completes the proof.
\end{proof}

For the sake of brevity we have overestimated the constant $c$ in the proof above.  Empirically, $c$ is always less than 2 and is typically close to 1 (see Section \ref{section:Performance}).

The space used by Algorithm 1 depends on how the base cases are handled, but can be bounded by $O(\lg|G|/\lg\lg|G|)$ group elements.  There are at most $\sum_{i=1}^rd(n_i)\sqrt{n_i}$ distinct $\alpha_i^{-q_i}$ that need to be precomputed, which fits within this bound.  In practice,  additional precomputation using slightly more storage, perhaps $O(\lg|G|)$ elements, can accelerate both the exponentiations and the base cases \cite{Sutherland:AbelianRoots}.

%  Note that Proposition \ref{proposition:DLComplexity} does not require us to make any of the parameters large, allowing us to give uniform asymptotic bounds as any of $p$, $m$, or $r$ tend toward infinity (or not).

For many groups arising ``in nature'', both sums in the bound for $\TDL(G)$ are typically dominated by their first terms.  Divisor class groups of curves and ideal class groups of number fields are, at least heuristically, two examples. One often sees an $L$-shaped Young diagram, with $n\approx m+r$, where $n=\log_p|G|$.  Algorithm~1 yields a useful improvement here, with a complexity of $O(p^{r/2})$ versus $O(mp^{r/2})$.  More generally, we have the following corollary.
\begin{corollary}\label{corollary:DLComplexity}
Let $\gvec{\alpha}$ be a basis for a finite abelian $p$-group $G$ of size $p^n$, exponent~$p^m$, and rank $r$.  There is a generic algorithm to compute $\DL(\gvec{\alpha},\beta)$ using
$$\TDL(G) = O\left(\frac{\lg(m+1)}{\lg\lg(m+2)}\lg|G|+\frac{n}{r}p^{r/2}\right)$$
group operations.
\end{corollary}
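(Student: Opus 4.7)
The plan is to deduce the corollary directly from Proposition~\ref{proposition:DLComplexity} by separately majorizing each of its two sums.

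For the first sum, I would use that $f(x) := \lg(x+1)/\lg\lg(x+2)$ is nondecreasing in $x$, so every $n_i \le n_1 = m$ gives $f(n_i) \le f(m)$. Since $\sum_i \lg|\alpha_i| = \lg|G|$, pulling $f(m)$ out of the sum bounds the first term by $\frac{\lg(m+1)}{\lg\lg(m+2)}\lg|G|$, as required.

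For the second sum, I would reindex by the value of $r_j$. Because $r_j = \#\{i : n_i > j\}$ takes the value $k$ for exactly $n_k - n_{k+1}$ indices $j \in \{0,\ldots,m-1\}$ (with the convention $n_{r+1} := 0$), the sum rewrites as $\sum_{k=1}^r (n_k - n_{k+1}) p^{k/2}$. Using the telescoping identity $\sum_k k(n_k - n_{k+1}) = \sum_i n_i = n$ and writing each term as $k(n_k - n_{k+1}) \cdot (p^{k/2}/k)$, I bound the whole sum by $n \cdot \max_{1 \le k \le r} p^{k/2}/k$.

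The main obstacle is estimating this maximum uniformly in $p$ and $r$. A short derivative calculation shows that $k \mapsto p^{k/2}/k$ has a unique interior minimum near $k = 2/\ln p$, so its maximum over integers in $\{1,\ldots,r\}$ is attained at an endpoint and equals $\max(\sqrt p, p^{r/2}/r)$. To absorb the $\sqrt p$ case into the desired form $(n/r)p^{r/2}$, I need $\sqrt p \le C \cdot p^{r/2}/r$, equivalently $r \le C \cdot p^{(r-1)/2}$, uniformly in integers $p \ge 2$ and $r \ge 1$. This reduces to a finite case check: for $r \ge 7$ one already has $r \le 2^{(r-1)/2} \le p^{(r-1)/2}$, and the worst remaining case (occurring at $p = 2$, $r = 3$) yields $C = 3/2$. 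Combining both bounds gives the claimed asymptotic estimate.
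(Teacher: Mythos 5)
Your proposal is correct, and while the first half coincides with the paper, your treatment of the second term takes a genuinely different route. For the first term you use exactly the paper's observation: $\lg(x+1)/\lg\lg(x+2)$ is nondecreasing, $n_i\le m$, and $\sum_i\lg|\alpha_i|=\lg|G|$. For the sum $\sum_{j=0}^{m-1}p^{r_j/2}$ from Proposition~\ref{proposition:DLComplexity}, the paper argues extremally: it fixes $n$ and $r$, asserts that the worst case up to a constant factor occurs when the $r_j$ are roughly equal (about $n/r$ terms each contributing $p^{r/2}$), and defers the justification to Lemma~\ref{lemma:prodsum} in a one-line parenthetical. You instead rewrite the sum exactly via the conjugate partition, $\sum_j p^{r_j/2}=\sum_{k=1}^r(n_k-n_{k+1})p^{k/2}$, use the telescoping identity $\sum_k k(n_k-n_{k+1})=n$ to bound it by $n\cdot\max_{1\le k\le r}p^{k/2}/k$, and then handle the maximum by the endpoint analysis together with the finite check that $r\le\frac{3}{2}\,p^{(r-1)/2}$ for all $p\ge 2$, $r\ge 1$. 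What your version buys is a self-contained, fully rigorous derivation with an explicit constant $\frac{3}{2}$ (not coincidentally the same constant as in Lemma~\ref{lemma:prodsum}, arising from the same worst case $p=2$); it also correctly exposes the point the paper glosses over, namely that the equal-$r_j$ configuration is not literally the maximizer --- the configuration with most $r_j=1$ beats it by exactly the factor $\frac{3}{2}$ at $p=2$, $r=3$, which is what forces the ``up to a constant factor'' caveat. The paper's version is shorter and reuses its appendix lemma, but leaves the extremal step as a sketch.
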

\begin{proof}
The first term is immediate from Proposition \ref{proposition:DLComplexity} since $\lg{G}=\sum_{i=1}^r\lg|\alpha_i|$ and $n_i\le m$ for all $i$.
For the second term, consider $\sum_{i=0}^{m-1}p^{r_i/2}$.  We have $r=r_0\ge r_i$ and $n=r_0+\cdots+r_{m-1}$.  For fixed $r$ and $n$ the worst case, up to a constant factor, occurs when the $r_i$ are roughly equal (one uses Lemma \ref{lemma:prodsum} to prove this).
\end{proof}

The asymptotic upper bound on $\TDL(G)$ achieved here is nearly tight for generic algorithms.
When $r=O(n)$, the bound in the corollary becomes $O(p^{r/2})$, matching Shoup's $\Omega(p^{r/2})$ lower bound.  Even when this is not the case, one may argue, along the lines of Shoup, that the sum $\sum p^{r_i/2}$ in Proposition \ref{proposition:DLComplexity} is tight in any event.  The first term in the corollary is $O(\lg^{1+\epsilon}|G|)$, and one does not expect to do better than $O(\lg|G|)$, due to the $\Omega(\lg|G|)$ lower bound for exponentiation.

To complete our discussion of discrete logarithms, we give an algorithm to compute $\DL(\gvec{\alpha},\beta)$ in an arbitrary finite abelian group $G$.  We assume that $\gvec{\alpha}$ is a prime-power basis for $G$, composed of bases $\gvec{\alpha}_p$ for each of the Sylow $p$-subgroups of $G$.  The construction of such a basis is discussed in the next section (and readily obtained from a given basis in any event).

\begin{algorithm}\label{algorithm:DL}
Given a prime-power basis $\gvec{\alpha}$ for a finite abelian group $G$ with $|G|=N=q_1\cdots q_k$ a factorization into powers of distinct primes $p_1, \ldots, p_k$, and $\beta\in G$, the following algorithm computes $\evec{x}=\DL(\gvec{\alpha},\beta)$:
\end{algorithm}
\renewcommand\labelenumi{\theenumi.}
\begin{enumerate}
\item
Let $M_j=N/q_j$ and compute $\beta_j\leftarrow\beta^{M_j}$ for $j$ from 1 to $k$.
\vspace{4pt}
\item
Compute $\evec{x}_j\leftarrow \DL(\gvec{\alpha}_{p_j},\beta_j)$ using Algorithm \ref{algorithm:DLp}.
\vspace{4pt}
\item
Set $\evec{x}\leftarrow\evec{x}_1/M_1\circ\cdots\circ\evec{x}_k/M_k$.
\end{enumerate}
\renewcommand\labelenumi{\theenumi}
\vspace{6pt}
The symbol ``$\circ$" denotes concatenation of vectors.  Since $\beta_j^{q_j}=\identity$, we must have $\beta_j\in\langle\gvec{\alpha}_{p_j}\rangle$, and the components of $\gvec{\alpha}_{p_j}$ all have order a power of $p_j$.  The exponent vector $\evec{x}_j$ is thus divisible by $M_j$, since $M_j$ is coprime to $p_j$ and therefore a unit in each factor of the ring $R_{\gvec{\alpha}_{p_j}}$.  The correctness of Algorithm \ref{algorithm:DL} follows easily.
\smallbreak

Let $n=\lg N$.  For $k=O(\lg{n})$, the exponentiations in step 1 can be performed using $O(n)$ group operations, by Yao's Theorem.
As $k$ approaches $n$, this bound increases to $O(n^2/(\lg n)^2)$, and one should instead apply the $O(n\lg n/\lg\lg n)$ algorithm of \cite[Alg.~7.4]{Sutherland:Thesis}.  The total running time is then
\begin{equation}\label{equation:TDLGeneral}
\TDL(G) = O\left(\frac{\lg(k+1)}{\lg\lg(k+2)}\lg|G|\right) + \sum_{j=1}^k \TDL(G_{p_j}),
\end{equation}
group operations, where $G_{p_j}$ denotes the Sylow $p_j$-subgroup of $G$.  For sufficiently large $|G|$, the bound for $\TDL(G)$ is dominated by the sum in (\ref{equation:TDLGeneral}).

\section{Constructing a basis for a finite abelian $p$-group}\label{section:Structure}

For a finite abelian group $G$, the {\em group structure} problem asks for a factor decomposition of $G$ into cyclic groups of prime-power order, with a generator for each factor.  This is equivalent to computing a basis for each of the (nontrivial) Sylow $p$-subgroups of $G$.  We first suppose that $G$ is a $p$-group and then give a reduction for the general case in Section \ref{section:GeneralBasis}.

Typically, a basis is derived from a matrix of relations among elements of a generating set for the group \cite{Buchmann:BabyGiant,Buchmann:GroupStructure,Buchmann:BinaryQuadraticForms,Teske:GroupStructure}.
This generating set may be given, or obtained (with high probability) from a random sample.
One then computes the Smith normal form of the relation matrix \cite[\S 2.4]{Cohen:CANT}, applying corresponding group operations to the generating set to produce a basis.

Relations may be obtained via \emph{extended discrete logarithms}.  If $\gvec{\alpha}$ is a basis for a subgroup of $G$  and $\beta\in G$, then $\EDL(\gvec{\alpha},\beta)$ is the pair $(\evec{x},y)$ satisfying $\beta^y=\gvec{\alpha}^{\evec{x}}$ that minimizes $y>0$, with $\evec{x}\in R_{\gvec{\alpha}}$. In a $p$-group, $y$ is necessarily a power of $p$.

While our approach does not require us to compute $\EDL(\gvec{\alpha},\beta)$, we note that any algorithm for $\DL(\gvec{\alpha},\beta)$ can be used to compute $\EDL(\gvec{\alpha},\beta)$.% with only a slight increase in the running time.
\begin{lemma}\label{lemma:EDLreduction}
Given a basis $\gvec{\alpha}$ for a subgroup of a finite abelian $p$-group $G$ and $\beta\in G$, there is a generic algorithm to compute $\EDL(\gvec{\alpha},\beta)$ using at most
$$\left\lceil\lg(\log_p|\beta|)\right\rceil \TDL(G) + 2\lg|\beta|$$
group operations.
\end{lemma}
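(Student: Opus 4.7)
The plan is to reduce $\EDL(\gvec{\alpha},\beta)$ to a binary search over the exponent $k$ in the relation $\beta^{p^k}\in\langle\gvec{\alpha}\rangle$, using Algorithm \ref{algorithm:DLp} as a subroutine. Since $\langle\gvec{\alpha}\rangle$ is a $p$-group inside the $p$-group $G$, the minimal positive $y$ with $\beta^y\in\langle\gvec{\alpha}\rangle$ must be a power of $p$: any factor of $y$ coprime to $p$ would be a unit in $R_{\gvec{\alpha}}$ and could be canceled, contradicting minimality. Writing $y=p^{k^*}$, $k^*$ is the least integer with $\beta^{p^{k^*}}\in\langle\gvec{\alpha}\rangle$. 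Because $\beta^{p^\lambda}=\identity\in\langle\gvec{\alpha}\rangle$ for $\lambda=\log_p|\beta|$, we have $k^*\in[0,\lambda]$. Moreover the predicate $P(k):=[\beta^{p^k}\in\langle\gvec{\alpha}\rangle]$ is monotone, since $\langle\gvec{\alpha}\rangle$ is closed under $p$-th powers, so binary search over $[0,\lambda]$ locates $k^*$ in $\lceil\lg\lambda\rceil$ probes.

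First I would precompute the tower $\beta^{p^0},\beta^{p^1},\dots,\beta^{p^\lambda}$ by repeated $p$-th powering, at total cost $\lambda\lceil\lg p\rceil=O(\lg|\beta|)$, so that any probe can be serviced without further exponentiation. Then I would binary search on $[0,\lambda]$, anchored by the known fact that $P(\lambda)$ holds. Each iteration probes some midpoint $m$ by calling Algorithm \ref{algorithm:DLp} on $\beta^{p^m}$ at cost $\TDL(G)$, either obtaining a valid exponent vector $\evec{x}_m$ with $\gvec{\alpha}^{\evec{x}_m}=\beta^{p^m}$ (in which case $P(m)$ holds) or learning that $\beta^{p^m}\notin\langle\gvec{\alpha}\rangle$. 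Upon termination the search returns $k^*$ together with $\evec{x}=\evec{x}_{k^*}$, and a single final verification $\gvec{\alpha}^{\evec{x}}=\beta^{p^{k^*}}$ guards against error at cost $O(\lg|\beta|)$.

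The main obstacle is to ensure that the per-probe membership test adds no asymptotic cost beyond the $\TDL(G)$ of the DL call itself; a naive separate verification inside the binary search would cost $O(\lg|\gvec{\alpha}|)$ per probe and inflate the bound by a factor of $\lceil\lg\log_p|\beta|\rceil$. The resolution is to fold the test into Algorithm \ref{algorithm:DLp} itself, exploiting the structure of its base cases. These are handled by a cyclic-group solver (BSGS or a Pollard rho variant), each of which either reports a verified collision yielding a correct partial exponent, or exhausts its search space without one, certifying non-membership. A failure at any base case can thus be propagated up the recursion to abort the probe immediately, and no separate verification is needed during binary search. Combining the three contributions --- $O(\lg|\beta|)$ for the precomputation of the $\beta^{p^i}$, $\lceil\lg(\log_p|\beta|)\rceil\cdot\TDL(G)$ for the probes, and $O(\lg|\beta|)$ for the single concluding verification --- yields the stated bound $\lceil\lg(\log_p|\beta|)\rceil\TDL(G)+2\lg|\beta|$.
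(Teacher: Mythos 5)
Your proposal is correct and follows essentially the same route as the paper: precompute the tower $\beta^{p^j}$ for $0\le j\le\log_p|\beta|$ (cost $2\lg|\beta|$), then binary search for the least $j$ at which Algorithm \ref{algorithm:DLp} succeeds, relying on the base cases to certify failure. The only (harmless) divergence is your concluding verification step, which is redundant --- a successful discrete logarithm computation with verified base cases already certifies membership --- and which, if counted, would push the non-$\TDL$ cost slightly past the stated $2\lg|\beta|$.
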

\begin{proof}
Assume Algorithm \ref{algorithm:DLp} returns an error whenever a base case fails.\footnote{Failure detection with  baby-steps giant-steps or lookup table is easy.  See \cite{Teske:GroupStructure} for a rho search.}  Compute $\beta^{p^j}$ for $0 \le j \le \log_p|\beta|$.  Then use a binary search to find the least $j$ for which one can successfully compute $\evec{x}=\DL(\gvec{\alpha},\beta^{p^j})$.  We then have $\EDL(\gvec{\alpha},\beta)=(\evec{x},p^j)$.
\end{proof}

Teske gives an algorithm to directly compute $\EDL(\gvec{\alpha},\beta)$, avoiding the $\lg(\log_p|\beta|)$ factor above, but this may still be slower than applying Lemma \ref{lemma:EDLreduction} to Algorithm \ref{algorithm:DLp}.  Alternatively, we may modify Algorithm $\ref{algorithm:DLp}$ to solve a slightly easier problem.  Instead of solving $\beta^y=\gvec{\alpha}^{\evec{x}}$, we seek a solution to $\beta^y=\gvec{\alpha}^{y\evec{x}}$.

More specifically, let us define a function $\DL_{\gvec{\alpha}}^*(j,k,\beta)$ that extends the function $\DL_{\gvec{\alpha}}(j,k,\beta)$ computed by Algorithm 1.  If $\gvec{\alpha}$ is a basis for a subgroup of an abelian $p$-group $G$ and $\beta\in G$, we wish to compute a pair $(\evec{x},h)$, with $\evec{x}=\DL_{\gvec{\alpha}}(j+h,k,\beta^{p^h})$ and $h\ge 0$ minimal.  It may be that there is no $h<k-j$ for which such a pair exists, and in this case\footnote{Arguably, $h$ should be $\log_p|\beta|$ here (so that $\beta^{p^h}=\gvec{\alpha}^{p^h\evec{x}}$), but this is less convenient.} we let $h=k-j$ and $\evec{x}=0$.  When $\gvec{\alpha}$ generates a subgroup with exponent $p^m$, we use $\DL^*(\gvec{\alpha},\beta)$ to denote $\DL_{\gvec{\alpha}}^*(0,m,\beta)$.

\begin{algorithm}\label{algorithm:EDLp}
Let $\gvec{\alpha}$ be a basis for a subgroup of a finite abelian $p$-group $G$ and let $t\in\Z_{>0}$.
Given $\beta\in G$ and $0\le j<k$, compute $(\evec{x},h)=\DL_{\gvec{\alpha}}^*(j,k,\beta)$ as follows:
\end{algorithm}
\renewcommand\labelenumi{\theenumi.}
\begin{enumerate}
\item
If $k-j\le t$, compute $(\evec{x},h)\leftarrow\DL_{\gvec{\alpha}}^*(j,k,\beta)$ as a base case.  Return $(\evec{x},h)$.
\vspace{4pt}
\item
Choose integers $j_1,\ldots,j_w$ satisfying $j=j_1<j_2<\ldots<j_w<j_{w+1}=k$.
\vspace{4pt}
\item
Compute $\gamma_i=\beta^{p^{{j_i}-j}}$ for $i$ from 1 to $w$, and set $\evec{x}\leftarrow 0$.
\vspace{4pt}
\item
For $i$ from $w$ down to 1:
\renewcommand\labelenumii{\theenumii.}
\begin{enumerate}
\item
\vspace{3pt}
Recursively compute $(\evec{v},h)\leftarrow \DL_{\gvec{\alpha}}^*(j_i,j_{i+1},\gamma_i\gvec{\alpha}(j_i,k)^{-\evec{x}})$.
\item
\vspace{3pt}
Set $\evec{x}\leftarrow \evec{s}\evec{v} + \evec{x}$, where $\evec{s}=\evec{q}(j_i+h,j_{i+1})/\evec{q}(j_i+h,k)$.
\item
\vspace{3pt}
If $h>0$ then return $(\evec{x},j_i+h)$.
\end{enumerate}
\vspace{4pt}
\item 
Return $(\evec{x},0)$.
\end{enumerate}
\vspace{6pt}
\noindent
For $t=1$, the base case simply computes $\evec{x}=\DL(\gvec{\alpha},\beta)$ and returns $(\evec{x},0)$, or $(\evec{0},1)$ if a failure occurs.  When $t>1$, one applies Lemma \ref{lemma:EDLreduction} (if a lookup table is used, this means $O(\lg{t})$ table lookups and $O(t\lg{p})$ group operations).
\smallbreak

Aside from the computation of $h$ and the possibility of early termination, Algorithm 3 is essentially the same as Algorithm 1.  Indeed, assuming $t=1$, if $(\evec{x},h)$ is the output of Algorithm 3, the sequence of group operations performed by Algorithm 1 on input $\gvec{\alpha}^{\evec{x}}$ will be effectively identical (ignoring operations involving the identity).  Thus the complexity bounds in Proposition \ref{proposition:DLComplexity} and its corollary apply.

To verify the correctness of Algorithm 3, we first note that if $h=k-j$, then the first base case must have failed and the output $(\evec{0},h)$ is correct.  If $h<k-j$, then it follows from the correctness of Algorithm 1 that $\evec{x}=\DL_{\gvec{\alpha}}(j+h,k,\beta^{p^h})$.  It is only necessary to check that $h$ is minimal, but if not, the base case $\DL_{\evec{\alpha}}(j+h-1,j+h,\beta')$ would have succeeded and $h$ would be smaller.

%$When $\EDL(\gvec{\alpha},\beta) = (0,0)$ the algorithm is particularly efficient, requiring only a single base case computation in $H_{s-1}^s$.  If $r_s$ is the rank of $H_{s-1}^s$, the total cost is then
%$$O(\lg|H| + p^{r_s/2})$$
%group operations.  The case $\EDL(\gvec{\alpha},\beta) = (0,0)$ is significant because it implies that $\gvec{\alpha}\circ\beta=(\alpha_1,\ldots,\alpha_r,\beta)$ is a basis for the group $\langle\gvec{\alpha},\beta\rangle$.

We now explain how to construct a basis using Algorithm 3.  Let us start with a vector $\gvec{\alpha}$ consisting of a single element of $G$.  Clearly $\gvec{\alpha}$ is a basis for the cyclic subgroup it generates, and we would like to extend $\gvec{\alpha}$ to a basis for all of $G$ by adding elements to it one by one.  This will only be possible if our basis at each step generates a subgroup $H$ that is a factor of $G$ (meaning $G\cong H\times G/H$).  Some care is required, since $H$ need not be a factor of $G$, but let us first consider how to extend a basis.

Given a basis $\gvec{\alpha}$ for a subgroup $H$ of $G$, we say that $\gamma\in G$ is \emph{independent} of~$\gvec{\alpha}$ if the vector $\gvec{\alpha}\circ\gamma=(\alpha_1,\ldots,\alpha_r,\gamma)$ is a basis for $\langle \gvec{\alpha},\gamma\rangle$, and we write $\gamma\perp\gvec{\alpha}$.  The following lemma shows how and when one may use $\DL^*(\gvec{\alpha},\beta)$ to obtain such a $\gamma$.
\begin{lemma}\label{lemma:Independence}
Let $\gvec{\alpha}$ be a basis for a subgroup of a finite abelian $p$-group $G$, with $n_i=\log_p|\alpha_i|$, $m_0=\min n_i$, and $m=\max n_i$.  Let $\beta\in G$ and let $\gamma=\beta\gvec{\alpha}^{-\evec{x}}$, where  $(\gvec{x},h)=\DL^*(\gvec{\alpha},\beta)$.  The following hold:
\renewcommand\labelenumi{(\roman{enumi})}
\begin{enumerate}
\item
If $|\beta|\le p^m$, then $|\gamma|=p^h$.
\vspace{3pt}
\item
If $|\beta|\le p^m$ and $|\gamma| \le p^{m_0}$, then $\gamma\perp\gvec{\alpha}$.
\end{enumerate}
\end{lemma}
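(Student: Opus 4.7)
The plan is to derive both parts from the minimality of $h$ built into $\DL^*(\gvec{\alpha},\beta)$, together with the identity $\gvec{\alpha}(h',m)=\gvec{\alpha}^{p^{h'}}$ that holds for any $0\le h'\le m$ because every $n_i\le m$. Write $H=\langle\gvec{\alpha}\rangle$; by Lemma~\ref{lemma:BasisVector}, $\langle\gvec{\alpha}(h',m)\rangle=H(h',m)$.

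For (i), unwinding the definition of $\DL^*$ gives $\beta^{p^h}=\gvec{\alpha}(h,m)^{\evec{x}}=\gvec{\alpha}^{p^h\evec{x}}$, so $\gamma^{p^h}=\identity$ and $|\gamma|$ divides $p^h$. If $|\gamma|<p^h$, then $\gamma^{p^{h-1}}=\identity$ rearranges to $\beta^{p^{h-1}}=\gvec{\alpha}(h-1,m)^{\evec{x}}\in H(h-1,m)$, so $\DL_{\gvec{\alpha}}(h-1,m,\beta^{p^{h-1}})$ is well-defined, contradicting the minimality of $h$. I will dispose of the failure branch ($h=m$, $\evec{x}=0$, $\gamma=\beta$) separately: if $|\beta|<p^m$ then $\beta^{p^{m-1}}=\identity$ has the trivial DL, which would force $h<m$; so $|\beta|=p^m$ and $|\gamma|=|\beta|=p^h$.

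For (ii), $\gvec{\alpha}\circ\gamma$ is a basis for $\langle\gvec{\alpha},\gamma\rangle$ iff $\langle\gamma\rangle\cap H=\{\identity\}$, and since (i) gives $|\gamma|=p^h$ it suffices to rule out $\gamma^z\in H$ for $0<z<p^h$. I would factor $z=p^{h''}z'$ with $\gcd(z',p)=1$ and $0\le h''<h$, then invert $z'$ modulo $p^{h-h''}$ to conclude $\gamma^{p^{h''}}\in H$, say $\gamma^{p^{h''}}=\gvec{\alpha}^{\evec{y}}$. Raising to the $p^{h-h''}$ power and using that $\gvec{\alpha}$ is a basis forces $p^{n_i-h+h''}\mid y_i$ for every $i$; the hypothesis $|\gamma|\le p^{m_0}$, i.e., $h\le n_i$ for all $i$, upgrades this to $p^{h''}\mid y_i$. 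Writing $y_i=p^{h''}y'_i$ gives
$$\beta^{p^{h''}}=\gamma^{p^{h''}}\gvec{\alpha}^{p^{h''}\evec{x}}=\gvec{\alpha}^{p^{h''}(\evec{y}'+\evec{x})}=\gvec{\alpha}(h'',m)^{\evec{y}'+\evec{x}}\in H(h'',m),$$
so $\DL_{\gvec{\alpha}}(h'',m,\beta^{p^{h''}})$ is defined, contradicting the minimality of $h$.

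The main obstacle is the divisibility upgrade in (ii): the bare containment $\gamma^{p^{h''}}\in H$ does not place $\beta^{p^{h''}}$ inside $H^{p^{h''}}$, yet that is exactly what produces a valid DL at level $h''$. The hypothesis $|\gamma|\le p^{m_0}$ is the precise slack needed to pass from the order relation $p^{n_i}\mid p^{h-h''}y_i$ to the stronger divisibility $p^{h''}\mid y_i$; without it the exponent $n_i-h+h''$ could dip below $h''$ and the argument collapses.
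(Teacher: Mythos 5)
Your proposal is correct and follows essentially the same route as the paper: part (i) is identical, and part (ii) uses the same two ingredients (the minimality of $h$ and the order constraint forced by $|\gamma|\le p^{m_0}$), merely run in the opposite order --- you deduce $p^{h''}\mid\evec{y}$ from the order of $\gamma$ and then contradict minimality, whereas the paper uses minimality to show the exponent vector is \emph{not} divisible by $p^{j}$ and then contradicts the order bound. The only cosmetic difference is your explicit reduction from an arbitrary relation $\gamma^{z}\in\langle\gvec{\alpha}\rangle$ to one with $z$ a power of $p$, which the paper takes for granted.
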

\begin{proof}
If $h<m$, then $\evec{x}=\DL_{\gvec{\alpha}}^*(h,m,\beta)$, and we have
$$\beta^{p^h}=\gvec{\alpha}(h,m)^{\evec{x}}=\gvec{\alpha}^{\evec{q}(h,m)\evec{x}}=\gvec{\alpha}^{p^h\evec{x}},$$
since $\evec{q}(h,m)$, as defined in (\ref{equation:alphajk}), has $q_i=p^{\max(h,n_i-m-h)}=p^h$.  It follows that $$\gamma^h=(\beta\gvec{\alpha}^{-\evec{x}})^{p^h}=\identity,$$
and this cannot hold for any $h'<h$, by the minimality of $h$.  Thus (i) holds when $h<m$. 
Now suppose $h=m$. Then $\evec{x}=\evec{0}$, $\gamma=\beta$, and $|\gamma|=|\beta|\ge p^h=p^m$.  If $|\beta|\le p^m$, then $|\gamma|=|\beta|=p^h$; thus (i) also holds when $h=m$.

To prove (ii), assume $|\beta|\le p^m|$ and $|\gamma|\le p^{m_0}$, and suppose $\gamma\perp\gvec{\alpha}$ does not hold.  Then there is a nontrivial relation of the form $\gamma^{p^j}=\gvec{\alpha}^{\evec{z}}$, for some $\evec{z}\in R_{\gvec{\alpha}}$ and $j < h$, since $|\gamma|=p^h$ by (i).  We claim that $\evec{z}$ is not divisible by $p^j$, since
$$\gamma^{p^j}=(\beta\gvec{\alpha}^{-\evec{x}})^{p^j} = \beta^{p^j}\gvec{\alpha}^{-p^j\evec{x}}=\gvec{\alpha}^{\evec{z}},$$
and if $p^j$ divides $\evec{z}$ we can set $\evec{v}=\evec{x}+\evec{z}/p^j$ to obtain
$$\beta^{p^j}=\gvec{\alpha}^{\evec{z}+p^j\evec{x}}=(\gvec{\alpha}^{\evec{v}})^{p^j}=\gvec{\alpha}^{\evec{q}(j,m)\evec{v}}=\gvec{\alpha}(j,m)^{\evec{v}},$$
which contradicts the minimality of $h$.  We now note that if $|\gamma|\le p^{m_0}$, then $\gamma^{p^j}$ has order at most $p^{m_0-j}$.  But $\gvec{\alpha}^{\evec{z}}$ has order greater than $p^{m_0-j}$, since some $z_i$ is not divisible by $p^j$, and therefore $|\alpha_i^{z_i}| > p^{n_i-j}\ge p^{m_0-j}$, yielding a contradiction.
\end{proof}

Lemma \ref{lemma:Independence} not only tells us how to find independent elements, it gives sufficient conditions to ensure that this is possible.  This yields a remarkably simple algorithm to construct a basis from a generating set $S$.

Start with $\evec{\alpha}$ consisting of a single element of $S$ with maximal order $p^m$.  Every $\beta\in S$ then satisfies $|\beta|\le p^m=p^{m_0}$, and we may use Algorithm \ref{algorithm:DLp} to compute an independent $\gamma=\beta\gvec{\alpha}^{-\evec{x}}$ for each $\beta$.  We can then choose one with maximal order to extend our basis $\evec{\alpha}$ and continue in this fashion until we have a basis spanning the entire group generated by $S$.

\begin{algorithm}\label{algorithm:pbasis}
Given a subset $S$ of a finite abelian $p$-group, the following algorithm computes a basis $\gvec{\alpha}$ for $G=\langle S\rangle$:
\end{algorithm}
\renewcommand\labelenumi{\theenumi.}
\begin{enumerate}
\item
Set $\gvec{\alpha}\leftarrow\emptyset$ and compute $h_i\leftarrow\log_p|\beta_i|$ for each $\beta_i\in S$.
\vspace{4pt}
\item
If every $h_i=0$, return $\gvec{\alpha}$.\\
Otherwise pick a maximal $h_i$, set $\gvec{\alpha}\leftarrow\gvec{\alpha}\circ \beta_i$, then $\beta_i\leftarrow\identity$ and $h_i\leftarrow 0$.
\vspace{4pt}
\item
For each $h_i>0$:
\renewcommand\labelenumii{\theenumii.}
\begin{enumerate}
\item
\vspace{3pt}
Compute $(\evec{x},h)\leftarrow \DL^*(\gvec{\alpha},\beta_i)$ using Algorithm \ref{algorithm:EDLp}.
\item
\vspace{3pt}
Set $\beta_i\leftarrow \beta_i\gvec{\alpha}^{-\evec{x}}$ and $h_i\leftarrow h$.
\end{enumerate}
\vspace{4pt}
\item
Go to step 2.
\end{enumerate}
\vspace{6pt}
\noindent
After step 1 we have (trivially) $\beta_i\perp\gvec{\alpha}$ for all $\beta_i$, and after step~2 we must have $h_i\le \min(\log_p|\alpha_i|)$ for all $h_i$.  By Lemma \ref{lemma:Independence}, these statements remain true after step~3, and at every step the algorithm ensures that $h_i=\log_p|\beta_i|$ and $\langle\gvec{\alpha},S\rangle = G$.  If every $h_i=0$, then $\langle S\rangle$ is trivial and $\gvec{\alpha}$ is a basis for $G$.  Some nonzero $h_i$ is set to zero each time step 2 is executed, so this eventually happens.  Note that when an element $\beta_i$ is appended to $\gvec{\alpha}$, its order $p^{h_i}$ is known, as desired.

\begin{proposition}\label{proposition:BasisComplexity}
Given a set $S$ that generates an abelian group $G$ of size $p^n$, exponent $p^m$, and rank $r$, there is a generic algorithm to compute a basis for $G$ using
$$\TB(S) \le c\Bigl(\frac{r\lg(m+1)}{\lg\lg(m+2)}\lg|G| + \frac{n}{r}p^{(r-1)/2}+ \bigl(|S|-r\bigr)\TDL(G)\Bigr),$$
group operations, where $\TDL(G)$ is as in Proposition \ref{proposition:DLComplexity} and $c$ is an absolute constant independent of $S$ and $G$.
\end{proposition}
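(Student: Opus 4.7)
The plan is to decompose $\TB(S)$ into the cost of computing orders in Step~1 and the cost of the $\DL^*$ calls in Step~3. First I would use Lemma~\ref{lemma:Independence} together with the invariants discussed immediately above the algorithm to conclude that Step~2 is executed exactly $r+1$ times, so that Step~3 is iterated $r$ times. The contribution of Step~1 is $O(|S|\lg|G|)$ group operations, obtained by $p$-th powering each $\beta_i$ at most $\log_p|\beta_i|$ times, which is absorbed into the stated bound.

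Next I would classify the $\DL^*$ calls of Step~3 by the ultimate fate of the input $\beta_i$: the $r-1$ \emph{basis-destined} inputs that become $\alpha_j$ (for $j\ge 2$) and the $|S|-r$ \emph{non-basis} inputs that terminate with $h_i=0$. Writing $\gvec{\alpha}_k$ for the basis after Step~2's $k$th pick (of rank $k$), a call in iteration $k$ costs at most $\TDL(\langle\gvec{\alpha}_k\rangle)$ by Proposition~\ref{proposition:DLComplexity}. For a basis-destined $\beta_i$ that becomes $\alpha_j$, the decisive call occurs in iteration $j-1$; summing these cost bounds yields $\sum_{j=2}^{r}\TDL(\langle\gvec{\alpha}_{j-1}\rangle)$. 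The first-term contributions in Proposition~\ref{proposition:DLComplexity} sum to at most $\frac{(r-1)\lg(m+1)}{\lg\lg(m+2)}\lg|G|$, and the second-term contributions $\frac{n_k}{k}p^{k/2}$ form a series in $p^{k/2}$ dominated by the $k=r-1$ term, giving $O(\frac{n}{r}p^{(r-1)/2})$; together these produce the first two terms of the proposition's bound.

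The main obstacle is that every $\beta_i$ is processed in every iteration up to the one in which it is either chosen (basis-destined) or zeroed out (non-basis), so crudely charging each call the full cost $\TDL(\langle\gvec{\alpha}_k\rangle)$ introduces a spurious factor of $r$. The resolution is to exploit Lemma~\ref{lemma:Independence}(i): after each reduction in iteration $k$, the updated $\beta_i$ has order exactly $p^{h}$ with $h$ strictly smaller than its previous order whenever $\beta_i\in\langle\gvec{\alpha}_k\rangle(0,h_i)$, so subsequent calls need only operate within the $p^h$-torsion subgroup. In that smaller group the base-case columns of the Young diagram beyond index $h$ contribute nothing (since $\beta_i^{p^h}=\identity$), and the incremental cost in iteration $\ell>k$ is bounded by $\TDL(\langle\gvec{\alpha}_\ell\rangle[p^{h}])$ rather than the ambient $\TDL(\langle\gvec{\alpha}_\ell\rangle)$. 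Telescoping this shrinking-order cost across iterations, and applying Proposition~\ref{proposition:DLComplexity} to each torsion subgroup, the cumulative cost of all $\DL^*$ calls on any single non-basis $\beta_i$ is $O(\TDL(G))$, yielding the $(|S|-r)\TDL(G)$ term. The delicate bookkeeping lies in verifying that Algorithm~\ref{algorithm:EDLp} actually skips the high-order columns at negligible cost, and that summing the $\sum_j p^{r_j/2}$ contributions across the $r$ iterations collapses (via the same geometric argument used in Corollary~\ref{corollary:DLComplexity}) into the stated geometric-series form $\frac{n}{r}p^{(r-1)/2}$ rather than ballooning to $r$ copies of $\TDL(G)$'s second term.
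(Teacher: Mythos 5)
Your overall strategy matches the paper's: bound the order computations in Step~1 by $O(|S|\lg|G|)$, split the $\DL^*$ calls according to whether $\beta_i$ eventually joins the basis, charge each of the $|S|-r$ non-basis elements $O(\TDL(G))$ by amortizing over its shrinking order, and treat the $r$ basis-destined elements separately. The amortization you describe---each successful base case strictly reduces the order of $\beta_i$, so the successful portions of all its $\DL^*$ calls glue into what is effectively a single computation of $\DL(\gvec{\alpha},\beta_i)$ against the final basis---is exactly the paper's argument for the $(|S|-r)\TDL(G)$ term.

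The gap is in the part you explicitly defer as ``delicate bookkeeping'': the base-case accounting for the elements that do join the basis, which is precisely where the term $\frac{n}{r}p^{(r-1)/2}$ comes from. Your per-element telescoping bound $O(\TDL(G))$ cannot be applied to these $r$ elements, since $r$ copies of the second term of $\TDL(G)$ give roughly $n\hspace{1pt}p^{r/2}$, exceeding the claimed $\frac{n}{r}p^{(r-1)/2}$ by a factor of about $r\sqrt{p}$; and charging only the ``decisive'' call in iteration $j-1$ ignores the base cases (including one failed base case per iteration) incurred by that element in iterations $1,\ldots,j-2$. The paper closes this with a global, per-iteration count rather than a per-element one: in iteration $i$ the current basis has rank $i$, so every base case, successful or failed, lives in a subgroup of size at most $p^{i}$ and costs $O(p^{i/2})$; exactly one base case fails for each of the $r-i$ still-active basis-destined elements, so all failures together cost $\sum_{i=1}^{r-1}(r-i)p^{i/2}=O(p^{(r-1)/2})$; and each element has at most one successful base case per column of the Young diagram (a digit, once cleared, is never recomputed), so each of the roughly $n/r$ columns contributes a sum of the same shape, yielding $\frac{n}{r}p^{(r-1)/2}$ in total. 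Without this count your argument establishes the first and third terms of the bound but not the second. A smaller omission: you never bound the Step~3b exponentiations $\beta_i\gvec{\alpha}^{-\evec{x}}$; the paper observes that each base-$p$ digit of the discrete logarithm of the original $\beta_i$ relative to the final basis is cleared at most once, giving $2\lg|G|$ per element, which is absorbed into the stated bound.
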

\begin{proof}
We apply Algorithm \ref{algorithm:pbasis}, setting $t=1$ in Algorithm \ref{algorithm:EDLp}.  We assume that a table of all nontrivial $p^j$th powers of each element of $\gvec{\alpha}$ is maintained throughout, for a total cost of at most $2\lg|G|$ group operations (this table can also be made available to Algorithm \ref{algorithm:EDLp}, avoiding the need for any precomputation).  For each $\beta \in S$, computing $|\beta|$ in step 1 requires less than $2m\lg{p}$ group operations.  The cost of all the exponentiations in step 3b related to $\beta$ is bounded by $2\lg|G|$ (if we consider $\evec{x}=\DL(\gvec{\alpha},\beta)$ for the initial value of $\beta$ relative to the final basis $\gvec{\alpha}$, each base-$p$ digit of $x$ is ``cleared" in step 3b at most once).  The total cost of all steps other than 3a is thus $O(|S|\lg|G|)$ group operations, which is bounded by the sum of the first and last terms in the bound for $T_{B}(S)$, for a suitable constant $c$.

We now consider the cost of step 3a for those $\beta=\beta_i\in S$ for which $h_i$ is never chosen in step 2, meaning $\beta_i$ is never appended to $\gvec{\alpha}$.  There are exactly $|S|-r$ such~$\beta$.  For each base case that succeeds in some computation $\DL^*(\gvec{\alpha},\beta)$, the order of~$\beta$ is reduced by a factor of $p$ in step 3b, so there are at most $m$ successful base cases relevant to $\beta$ in the entire execution of Algorithm \ref{algorithm:pbasis}.  Ignoring the cost of reaching the first base case, and failed base cases, the successful part of all the $\DL^*(\gvec{\alpha},\beta)$ computations involving $\beta$ corresponds to a single computation $\DL(\gvec{\alpha},\beta)$ with respect to the final basis $\gvec{\alpha}$ for $G$, which we bound by $\TDL(G)$.

When computing $\DL^*(\gvec{\alpha},\beta)$, reaching the first base case involves exponentiating~$\beta$ (and precomputing $\gvec{\alpha}(j,k)$, but this was addressed above).  The order of $\beta$ is bounded by the order of the most recently added component $\alpha_i$ of $\gvec{\alpha}$, hence the total cost of all the initial exponentiations of $\beta$ is at most $\sum_{i=1}^r2\lg|\alpha_i|=2\lg|G|$, which is bounded by a constant factor of $\TDL(G)$.  Summing over the $|S|-r$ different values of~$\beta$ yields the term $(|S|-r)\TDL(G)$ in the bound for $\TB(S)$.

It remains to consider the cost of step 3a for the elements $\beta_1,\ldots,\beta_r$ that are at some point appended to $\gvec{\alpha}$.  With $n_i=\log_p|\alpha_i|$, we have $m=n_1\ge \cdots\ge n_r\ge 1$.  Define $m_i=n_i-n_{i+1}$ for $1\le i \le r-1$, and let $f(x)=x\lg(x+1)/\lg\lg(x+2)$.  Excluding base cases, the successful part of all computations $\DL^*(\gvec{\alpha},\beta_i)$ may be bounded, as in Proposition \ref{proposition:DLComplexity}, by a constant factor of
\begin{equation}\label{equation:BBound1}
\sum_{i=1}^{r-1}(r-i)f(m_i)\lg{p}\thickspace \le\thickspace (r-1)f(m)\lg{p}\thickspace<\thickspace \frac{r\lg(m+1)}{\lg\lg(m+2)}\lg|G|,
\end{equation}
where we have used $\sum f(m_i)\le f(m)$ for positive integers $m_i$ with $\sum m_i\le m$.  As above, the total cost of reaching the first base case in the computations $\DL^*(\gvec{\alpha},\beta_i)$ for $\beta_i$ is at most $2\lg|G|$,
which may be incorporated into (\ref{equation:BBound1}), yielding the first term of the bound for $\TB(S)$.

Finally, we consider the cost of the base cases occurring for $\beta_1,\ldots,\beta_r$.  In the $i$th iteration of step 3a there are $r-i$ elements $\beta_i$ which have yet to be appended to $\gvec{\alpha}$, and exactly one base case fails for each of these.  Thus we may bound the cost of all failed base cases by a constant factor of
\begin{equation}\label{equation:BBound2}
\sum_{i=1}^{r-1}(r-i)p^{i/2} < \frac{p^{(r+1)/2}}{(\sqrt{p}-1)^2} < 12p^{(r-1)/2}.
\end{equation}
For the successful base cases, let $r_j$ be the rank of the subgroup of $p^j$th powers in~$G$, as in Proposition \ref{proposition:DLComplexity}, so that $n=r_0+\cdots+r_{m-1}$.  For each $r_j$ we obtain a sum of the form (\ref{equation:BBound2}), and note that, as in Corollary \ref{corollary:DLComplexity}, we may bound the cost to within a constant factor by assuming the $r_j$ are all approximately equal to $r$.  In this case we have $m=\lceil n/r\rceil$, yielding the term $(n/r)p^{(r-1)/2}$ in the bound for $\TB(S)$, which also covers the failed base cases, for a suitable choice of $c$.
\end{proof}

If we are given a set $S$ of independent elements, Proposition \ref{proposition:BasisComplexity} implies that we can typically verify that $S$ is a basis for $G=\langle S \rangle$ more quickly than we can compute discrete logarithms in $G$.  In fact this is true whenever $|S|=r$, even if the elements of $S$ are not independent.  More generally, we have the following corollary.

\begin{corollary}\label{corollary:BasisComplexity}
Given a generating set $S$ for a finite abelian $p$-group $G$ of rank $r$, with $|S|=r+O(1)$, there is a generic algorithm to compute a basis for $G$ using
$$\TB(G)\medspace=\medspace O\left(\lg^{2+\epsilon}|G|\right)+O\bigl(\TDL(G)\bigr)\medspace=\medspace O(|G|^{1/2})$$
group operations.
\end{corollary}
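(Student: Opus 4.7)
The plan is to deduce both equalities from Proposition \ref{proposition:BasisComplexity} and Corollary \ref{corollary:DLComplexity} by examining each of the three summands in the bound for $\TB(S)$ separately, so essentially no new ideas are needed.

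First I would dispose of the easy summands. Since $|S| = r + O(1)$, the third summand $(|S|-r)\TDL(G)$ in Proposition \ref{proposition:BasisComplexity} is immediately $O(\TDL(G))$. The middle summand $(n/r)p^{(r-1)/2}$ is bounded above by $(n/r)p^{r/2}$, which is the second term of the bound in Corollary \ref{corollary:DLComplexity} for $\TDL(G)$, so it is also absorbed into $O(\TDL(G))$. For the remaining summand $\frac{r\lg(m+1)}{\lg\lg(m+2)}\lg|G|$, I would apply the crude bounds $r \le n = \log_p|G| \le \lg|G|$ and $m \le n \le \lg|G|$, which yields $O(\lg^2|G| \cdot \lg\lg|G|) \subseteq O(\lg^{2+\epsilon}|G|)$ for any $\epsilon>0$. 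This establishes the first equality.

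For the second equality it suffices to show $\TDL(G) = O(|G|^{1/2})$, since $\lg^{2+\epsilon}|G|$ is certainly $O(|G|^{1/2})$. The first term in Corollary \ref{corollary:DLComplexity} is polylogarithmic in $|G|$ and is therefore negligible. For the second term, I need $(n/r)p^{r/2} = O(p^{n/2})$, equivalently $n/r \le C \cdot p^{(n-r)/2}$ for an absolute constant $C$. Writing $k = n - r \ge 0$, this reduces to $(k/r) + 1 \le C \cdot p^{k/2}$, and since $r \ge 1$ and $p \ge 2$ this follows from the elementary inequality $k + 1 \le 2 \cdot 2^{k/2}$ valid for all $k \ge 0$.

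I expect no substantive obstacle: the corollary is a bookkeeping exercise on top of Proposition \ref{proposition:BasisComplexity} and Corollary \ref{corollary:DLComplexity}. The only place requiring the slightest care is the reduction to $(k/r)+1 \le C p^{k/2}$, where one must bound uniformly in both $r$ and $p$; the trivial estimate $k/r \le k$ sidesteps any case analysis on the relative sizes of $r$ and $n$.
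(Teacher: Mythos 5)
Your proposal is correct and is exactly the derivation the paper intends: the corollary is stated without proof as an immediate consequence of Proposition \ref{proposition:BasisComplexity} and Corollary \ref{corollary:DLComplexity}, and your term-by-term bookkeeping (including the elementary estimate $n/r \le n-r+1 \le 2p^{(n-r)/2}$ for the final $O(|G|^{1/2})$) fills in precisely the omitted details.
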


When a generating set is not available, or when $|S|\gg r$, we may instead use a probabilistic algorithm to construct a basis from randomly sampled elements of $G$.  If $r$ is known (or bounded), Corollary \ref{corollary:BasisComplexity} can be applied to a randomly generated subset $S\subset G$ of size $r+t$ to obtain a generic Monte Carlo algorithm that is correct\footnote{This algorithm always outputs a basis for a subgroup $H$ of $G$, but it may be that $H<G$.} with probability at least $1-p^{-t}$.   This follows from the lemma below, whose proof can be found in \cite[Eq.~2]{Pomerance:GeneratingAbelianGroups} and also \cite[Lem.~4]{Acciaro:GeneratingGroups}.

\begin{lemma}\label{lemma:GeneratingProbability}
Let $G$ be a finite abelian $p$-group of rank $r$, and let $S$ be a sample of $s\ge r$ independent and uniformly distributed random elements of $G$.

Then $S$ generates $G$ with probability $\prod\limits_{j=s-r+1}^s\left(1-p^{-j}\right) > 1-p^{r-s}.$
\end{lemma}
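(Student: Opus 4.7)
The plan is to reduce the question to a standard counting problem over $\mathbb{F}_p$. Because $G$ is a finite abelian $p$-group, a subset generates $G$ if and only if its image generates the quotient $V=G/pG$. Indeed, if $\langle S\rangle+pG=G$ then inductively $\langle S\rangle+p^kG=G$ for every $k\ge 1$, and $p^kG$ is trivial once $p^k$ exceeds the exponent of $G$, forcing $\langle S\rangle=G$. Moreover the reduction map $G\to V$ is a surjective homomorphism with equal-sized fibers, so uniform independent samples in $G$ map to uniform independent samples in $V\cong\mathbb{F}_p^r$. Thus the probability that $S$ generates $G$ equals the probability that $s$ uniform random vectors span $\mathbb{F}_p^r$.

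Next I would count directly. The number of ordered $s$-tuples of vectors in $\mathbb{F}_p^r$ that span it equals the number of $r\times s$ matrices of rank $r$; building such a matrix one row at a time, where each new row must lie outside the span of the previous ones, gives a total of $\prod_{k=0}^{r-1}(p^s-p^k)$. Dividing by the $p^{rs}$ arbitrary matrices and reindexing with $j=s-k$ gives
\[
\prod_{k=0}^{r-1}\bigl(1-p^{k-s}\bigr)=\prod_{j=s-r+1}^{s}\bigl(1-p^{-j}\bigr),
\]
which matches the claimed product exactly.

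For the strict lower bound, I would apply the elementary inequality $\prod_i(1-a_i)\ge 1-\sum_i a_i$, valid for $a_i\in[0,1]$ and provable by induction, to obtain
\[
\prod_{j=s-r+1}^{s}(1-p^{-j})\ge 1-\sum_{j=s-r+1}^{s}p^{-j}=1-\frac{p^r-1}{(p-1)p^s}.
\]
Since $(p^r-1)/(p-1)=1+p+\cdots+p^{r-1}<p^r$ strictly for every prime $p\ge 2$, the right-hand side is strictly greater than $1-p^{r-s}$, giving the claim. There is no substantive obstacle here; the only step worth double-checking is the reduction to $V$, which genuinely uses the $p$-group hypothesis (for a general finite abelian $G$, generating $G/pG$ only detects the $p$-rank of $G$, so the argument would fail without it).
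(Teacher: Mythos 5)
Your proof is correct. The paper does not prove this lemma itself---it defers to Pomerance and to Acciaro---and your argument is essentially the standard one found in those references: reduce to the Frattini quotient $G/pG\cong\mathbb{F}_p^r$ (valid precisely because $G$ is a $p$-group, as you note), count full-rank $r\times s$ matrices over $\mathbb{F}_p$ to get the exact product, and finish with the Weierstrass inequality $\prod(1-a_j)\ge 1-\sum a_j$ together with the strict bound $\sum_{j=s-r+1}^{s}p^{-j}=(p^r-1)/\bigl((p-1)p^s\bigr)<p^{r-s}$. All steps check out, including the strictness of the final inequality.
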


In general, we do not know the rank of $G$, \emph{a priori}. Indeed, determining $r$ may be a reason for computing a basis.  In this situation we could apply Algorithm \ref{algorithm:pbasis} to progressively larger randomly generated sets $S$ until $|S|>r+t$, where $r$ is the rank of $\langle S\rangle$ and $t$ is a constant.  However, a more efficient approach is to simply select random $\beta\in G$, using the black box or via Lemma \ref{lemma:RandomElements} below, and attempt to use Lemma \ref{lemma:Independence} to extend the current basis.

This eliminates the loop in step 3 of Algorithm \ref{algorithm:pbasis}, but we must now address the situation where Lemma \ref{lemma:Independence} fails to apply ($|\beta|>p^m$ or $|\gamma|>p^{m_0}$). It may happen that the basis we have constructed cannot be extended to a basis for $G$, and in this case we need to backtrack.  Fortunately, this is easy to detect (and correct) and has negligible impact on the expected running time.

\begin{algorithm}\label{algorithm:pRandBasis}
Given a randomized black box for a finite abelian $p$-group $G$ and $t\in\Z_{>0}$, the following algorithm computes a basis $\gvec{\alpha}$ for a subgroup $H$ of $G$, where $H=G$ with probability at least $1-p^{-t}$:
\end{algorithm}
\renewcommand\labelenumi{\theenumi.}
\begin{enumerate}
\item
Set $s\leftarrow 0$.
Pick a random $\alpha_1\in G$ and set $\gvec{\alpha}\leftarrow(\alpha_1)$.
\vspace{4pt}
\item
If $s=t$, then return $\gvec{\alpha}$.
\vspace{4pt}
\item
Pick a random $\beta\in G$ and compute $(\evec{x},h)\leftarrow \DL^*(\gvec{\alpha},\beta)$.
\vspace{4pt}
\item
If $h=0$, then increment $s$ and go to step 2; otherwise set $\gamma\leftarrow \beta\gvec{\alpha}^{-\evec{x}}$.
\vspace{4pt}
\item
For each $\alpha_i$ with $|\alpha_i|<|\gamma|$, remove $\alpha_i$ from $\gvec{\alpha}$ and set $s\leftarrow 0$.
\vspace{4pt}
\item
Set $\gvec{\alpha}\leftarrow \gvec{\alpha}\circ\gamma$ and go to step 2.
\end{enumerate}
\vspace{6pt}
\noindent
The correctness of Algorithm \ref{algorithm:pRandBasis} depends on an easy corollary to Lemma \ref{lemma:Independence}.  If we let the (possibly empty) vector $\gvec{\alpha}'$ consist of those components of $\gvec{\alpha}$ that satisfy $|\alpha_i|\ge |\gamma|$, then $\gamma\perp \gvec{\alpha}'$ (the proof is the same).
It follows that after step 6, $\gvec{\alpha}$ is a basis for the subgroup it generates (this is obviously also true after step 1).  When the algorithm terminates, it has found $t$ (independent, uniformly distributed) random elements $\beta\in G$ that lie in $H=\langle\gvec{\alpha}\rangle$.  If $H$ is a proper subgroup of $G$, it must be smaller by a factor of at least $p$; the probability that $t$ random elements $\beta\in G$ all happen to lie in $H$ is then at most $p^{-t}$.

\begin{proposition}\label{proposition:pRandBasisComplexity}
Given a randomized black box for a finite abelian $p$-group $G$ of rank $r$, exponent $p^m$, and size $p^n$, and $t\in\Z_{>0}$, there is a probabilistic generic algorithm that computes a basis for a subgroup $H$ of $G$ using an expected
$$\TB^*(G)\medspace\le\medspace c\Bigl(\frac{r\lg(m+1)}{\lg\lg(m+2)}\lg|G| + \frac{n}{r}p^{(r-1)/2}\Bigr) + t\hspace{1pt}\TDL(G)\medspace =\medspace O(|G|^{1/2})$$
group operations, such that $H=G$ with probability at least $1-p^{-t}$.
The absolute constant $c$ is independent of both $t$ and $G$.
\end{proposition}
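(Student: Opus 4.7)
The plan is to adapt the proof of Proposition \ref{proposition:BasisComplexity} to the probabilistic setting of Algorithm \ref{algorithm:pRandBasis}. Correctness has already been established in the discussion preceding the proposition, so the task reduces to bounding the expected number of group operations. I would decompose the total expected cost into three buckets: (i) the $\DL^*$ work attributable to the $r$ basis-building extensions that survive in the final $\gvec{\alpha}$, (ii) the work spent on backtracking iterations (those in which Step 5 actually removes an element), and (iii) the work spent on the ``confirmation'' samples $\beta$ with $h=0$ that drive termination.

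For bucket (i), the analysis transfers almost verbatim from Proposition \ref{proposition:BasisComplexity}. The successful parts of the $r$ associated $\DL^*$ computations telescope as in (\ref{equation:BBound1}) to give the $r\lg(m+1)/\lg\lg(m+2)\cdot \lg|G|$ contribution, and the failed base cases encountered while searching for the next independent element contribute the $(n/r)p^{(r-1)/2}$ term via the geometric sum in (\ref{equation:BBound2}). For bucket (iii), the counter $s$ reaches $t$ at termination, so by Lemma \ref{lemma:Independence} and the random sampling assumption each confirmation costs at most $\TDL(G)$, contributing the $t\,\TDL(G)$ term; confirmations that occur before a backtrack resets $s$ are charged to the corresponding backtrack in bucket (ii).

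The main obstacle is bucket (ii). Here I would use the key structural observation that each backtrack in Step 5 replaces one or more $\alpha_i$ with a single $\gamma$ of strictly larger order, so the sorted order-vector $(\log_p|\alpha_1|,\ldots,\log_p|\alpha_{|\gvec{\alpha}|}|)$ strictly advances in dominance ordering. Since this sorted vector is always dominated by the partition $\pi(G)$, the total number of backtracks throughout the algorithm is finite and, in expectation, small: a probabilistic refinement using Lemma \ref{lemma:GeneratingProbability} shows that once the current $\gvec{\alpha}$ captures all but a bounded portion of $\pi(G)$, the probability that a fresh random $\beta$ produces a $\gamma$ of order exceeding some current $|\alpha_i|$ decays like $p^{-1}$ per iteration. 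Combined with amortizing each backtracking $\DL^*$ call against the subsequent clean work in the enlarged slot it creates, this folds the backtrack cost into the first two terms of the bound up to an absolute constant.

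Finally, to obtain the $O(|G|^{1/2})$ asymptotic I would substitute Corollary \ref{corollary:DLComplexity}: the factor $\lg(m+1)/\lg\lg(m+2)\cdot\lg|G|$ is $O(\lg^{2+\epsilon}|G|)$, the term $(n/r)p^{(r-1)/2}$ is $O(p^{r/2})=O(|G|^{1/2})$, and for fixed $t$ we have $t\,\TDL(G)=O(|G|^{1/2})$ by Corollary \ref{corollary:DLComplexity} applied to $G$. Summing these yields $T_B^*(G)=O(|G|^{1/2})$ as claimed, and the constant $c$ in the explicit bound is absolute since all constants in Proposition \ref{proposition:BasisComplexity}, Corollary \ref{corollary:DLComplexity}, and Lemma \ref{lemma:GeneratingProbability} are absolute.
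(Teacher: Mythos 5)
Your buckets (i) and (iii) are fine and match the paper's treatment of the non-backtracking work, but bucket (ii) --- which you correctly identify as the main obstacle --- is not actually resolved, and it is exactly where the paper's proof does its real work. Two concrete problems. First, your structural claim that each backtrack strictly advances the sorted order-vector in dominance order is false: if the current basis has orders $(p^3,p^2)$ and a sample yields $\gamma$ of order $p^4$, Step 5 removes both elements and the order-vector goes from $(3,2)$ to $(4)$, which does not dominate $(3,2)$ (its total is smaller). More importantly, even a correct finiteness statement about the number of backtracks bounds nothing in expectation; what is needed is a quantitative per-sample progress probability, and the appeal to Lemma \ref{lemma:GeneratingProbability} together with the phrase ``decays like $p^{-1}$ per iteration'' asserts one rather than proving it. The paper gets this by partitioning the execution into stages $k=1,\ldots,r$, where stage $k$ ends when the element $\alpha_k$ of the \emph{final} basis is produced, and observing that if $\evec{z}$ is the discrete logarithm of the fresh sample $\beta$ with respect to the final basis, then whenever $z_k$ is a unit modulo $p$ the call to $\DL^*$ returns $h=n_k$ and $\gamma=\alpha_k$, completing the stage. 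Hence each sample completes the current stage with probability at least $1-1/p$, no matter what spurious extensions have accumulated.

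Second, your plan to amortize each backtracking $\DL^*$ call ``against the subsequent clean work in the enlarged slot it creates'' glosses over the fact that every spurious extension enlarges the basis and therefore makes every \emph{subsequent} $\DL^*$ call in that stage more expensive --- the exponentiation cost grows roughly linearly in the number of failures, and the base-case cost can grow by a factor of $p^{1/2}$ per added rank. The paper confronts this directly: conditioning on the number $w$ of samples processed in a stage and assuming worst-case growth of the basis after each failure, it bounds the expected stage cost by series of the form $\sum_{w}\binom{w+1}{2}p^{-w}$ for the exponentiation part and a geometric series in $p^{-w/2}$ for the base-case part, evaluates both, and concludes $T_k\le c_2 S_k$ with $c_2<6$. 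Without some version of this computation --- a per-sample success probability together with a convergent series that beats the growth of the per-call cost --- the claim that the backtracking cost folds into the first two terms up to an absolute constant remains unproved.
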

\begin{proof}
We apply Algorithm \ref{algorithm:pRandBasis}.  If it never backtracks (removes elements from $\gvec{\alpha}$ in step 5), the final basis $\gvec{\alpha}$ is obtained from the first $r$ random elements, and then $t$ discrete logarithms are computed using this basis.  In this case, the bound $\TB^*(G)$ follows from an argument similar to that used in the proof of Proposition \ref{proposition:BasisComplexity}, with $|S|=r$ (and a better constant factor).  We will show that the expected cost of Algorithm \ref{algorithm:pRandBasis} is within a constant factor of the cost arising in this ideal scenario.

Let $\gvec{\alpha}=(\alpha_1,\ldots,\alpha_r)$ be the final basis output by Algorithm \ref{algorithm:pRandBasis}, and note that $|\alpha_1|\ge|\alpha_2|\ge\cdots\ge|\alpha_r|$.  As the computation proceeds, for each $k$ from 1 to $r$, there is a stage $k$ where $\gvec{\alpha}_{k-1}=(\alpha_1,\ldots,\alpha_{k-1})$ is a (possibly empty) prefix of the final basis, and the algorithm is in the process of determining $\alpha_k$.  This may involve extending and then backtracking to the prefix $\gvec{\alpha}_{k-1}$ (several times, perhaps), but once $\alpha_k$ is determined, we have the prefix $\gvec{\alpha}_k$ and transition to stage $k+1$.  If no backtracking occurs, the algorithm completes stage 1 after step 1, and a single computation of $\DL^*(\gvec{\alpha}_k,\beta)$ is required for each stage $k > 1$.  From Proposition \ref{proposition:DLComplexity}, the cost of this computation may be bounded by
\begin{equation*}
S_k=c_1\sum_{i=1}^{k-1}\frac{\lg(n_i+1)}{\lg\lg(n_i+2)}\lg|\alpha_i|+c_1\sum_{j=0}^{m-1} p^{r_j/2} = A_k + B_k,
\end{equation*}
where $c_1$ is a constant, $n_i=\log_p|\alpha_i|$, and the ranks $r_j\le k-1$ are as in Proposition~\ref{proposition:DLComplexity}.  Let $A_k$ and $B_k$ denote the two sums in $S_k$, including the factor $c_1$.

We now consider the probability that the computation $\DL^*(\gvec{\alpha}_{k-1},\beta)$ completes stage $k$.  Let $\evec{z}$ be the discrete logarithm of $\beta$ relative to the final basis $\gvec{\alpha}$.  Provided that $z_k$ is not divisible by $p$, when $h$ is computed in step 3 we will have $h=n_k$, and compute $\gamma=\alpha_k$ in step 4, since no subsequent computation can yield an independent element of order greater than $n_k$ (since $n_j\le n_k$ for $j>k$).  Thus for each random $\beta\in G$ processed during stage~$k$, the probability that we do not complete stage~$k$ is at most $1/p$ (this is true for any extension of $\gvec{\alpha}_{k-1}$ arising during stage~$k$).  Conditioning on $w$, the number of random $\beta\in G$ processed during stage~$k$, the expected cost of stage~$k$ may be bounded by a sum of the form
\begin{align}\label{equation:pRandBasis1}\notag
T_k &\le (1-p^{-1})(A_k+B_k)+p^{-2}((1+2)A_k + (1+p^{1/2})B_k)+\ldots\\
T_k &\le \left(\frac{p-1}{p} + \sum_{w=2}^\infty \binom{w+1}{2}p^{-w}\right)A_k+\left(\frac{p-1}{p}+ \sum_{w=2}^\infty bp^{-w/2}\right)B_k,
\end{align}
where $b=1/(\sqrt{p}-1)$.  We have assumed here, as a worst case, that after processing each $\beta$ the current basis is extended by a $\gamma$ that maximizes the cost of subsequent discrete logarithm computations.  For each increment in $w$ we suppose that $|\langle\gvec{\alpha}\rangle|$ increases by a factor of $|\langle\gvec{\alpha}_{k-1}\rangle|$ (in fact, it increases by at most a factor of $|\alpha_{k-1}|$) and that every $r_j$ increases by 1.

The second sum in (\ref{equation:pRandBasis1}) is a geometric series, bounded by $b/(p-\sqrt{p})<5$. Summation by parts yields the identity
$$\sum_{w=1}^{\infty}\binom{w+1}{2}p^{-w}=\frac{p^2}{(p-1)^3},$$
allowing us to bound the first sum in (\ref{equation:pRandBasis1}) by 4.  Hence $T_k\le c_2S_k$ for a constant $c_2<6$, and the bound on $\TB^*(G)$ follows.  The correctness probability was addressed above, and clearly $c=c_1c_2$ is independent of $t$ and $G$.
\end{proof}

%As usual, we have overestimated the constant factors in the proof above.
In practice, the constant $c$ in Proposition \ref{proposition:pRandBasisComplexity} is quite small and $\TB^*(G)\approx t\TDL(G)$, even when $p=2$ (the worst case, as far as the constant factors are concerned).  When $\TDL(G)$ is dominated by $p^{r/2}$, the constant $t$ can be improved to $\sqrt{t}$ using a baby-steps giant-steps approach, as discussed in Section \ref{section:Performance}.

If we are given a bound $M$ satisfying $M\le |G| < pM$ (perhaps $M=|G|$), we can easily convert Algorithm \ref{algorithm:pRandBasis} from a Monte Carlo algorithm to a Las Vegas algorithm by replacing the test ``$s=t$" in step 2 with ``$|\langle\gvec{\alpha}\rangle|\ge M$" (note that $|\langle\gvec{\alpha}\rangle|=\prod|\alpha_i|$).

We now give a method to construct uniformly random elements of $G$ from a generating set $S$.  This is useful in general and allows us to apply Algorithm \ref{algorithm:pRandBasis} to a generating set $S$, which may be faster than using Algorithm \ref{algorithm:pbasis} when $|S|\gg r$.

\begin{lemma}\label{lemma:RandomElements}
Given a generating set $S$ for a finite abelian group $G$ with exponent $p^m=E$, and $t\in\Z_{>0}$, there is a generic algorithm to compute $t$ independent, uniformly random elements of $G$ using
$$T_R(S,t)\le(3\lg{E})|S|+2t\left(\frac{\lceil\lg(E+1)\rceil}{\lceil\lg\lg(E+2)\rceil}+1\right)|S|$$
group operations and storage for at most $t+\lg{E}$ group elements.
\end{lemma}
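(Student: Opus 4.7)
The plan is to produce uniformly random elements of $G$ by exponentiating the given generators by uniform integers modulo the exponent $E$.  Write $S=\{s_1,\ldots,s_k\}$ with $k=|S|$, and consider the map $\phi\colon\Z_E^k\to G$ defined by $\phi(\evec{e})=s_1^{e_1}\cdots s_k^{e_k}$.  First I would check that $\phi$ is well defined, using the fact that $|s_i|$ divides $E$ for every $i$ (since $E$ is the exponent of $G$), and that it is a homomorphism, hence surjective because $S$ generates $G$.  Since the fibers of a surjective homomorphism of finite abelian groups are cosets of its kernel, they all have the same cardinality, so $\phi$ pushes the uniform distribution on $\Z_E^k$ forward to the uniform distribution on $G$.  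Consequently it suffices to draw $t$ independent uniform vectors $\evec{e}_1,\ldots,\evec{e}_t\in\Z_E^k$ and output $\phi(\evec{e}_j)$ for $j=1,\ldots,t$.

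To compute these products efficiently the algorithm would process the bases one at a time, carrying along $t$ accumulators $\rho_1,\ldots,\rho_t$ initialized to $\identity$.  For each $i$ from $1$ to $k$, apply the online algorithm of Theorem \ref{theorem:Yao} with common base $s_i$ and exponent sequence $e_{i,1},\ldots,e_{i,t}$; since Yao's algorithm is online, each output $s_i^{e_{i,j}}$ appears before the next exponent is consumed, so I would multiply it into $\rho_j$ immediately and then discard it.  After all $k$ bases have been processed, $\rho_j=\phi(\evec{e}_j)$ as desired.

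The complexity analysis follows by applying Theorem \ref{theorem:Yao} base by base.  Since every exponent is at most $E-1$, the cost for a fixed $s_i$ is at most $\lg E+2t\lceil\lg(E+1)/\lg\lg(E+2)\rceil$ multiplications, with $c\le 2$ in Yao's bound.  Summing over the $|S|$ bases and adding the at most $t(|S|-1)$ multiplications used to update the accumulators yields the claimed bound on $T_R(S,t)$: the $(3\lg E)|S|$ term absorbs the per-base precomputation contribution $\lg E$ together with the ceiling bookkeeping between the forms $\lceil a/b\rceil$ and $\lceil a\rceil/\lceil b\rceil$, and the trailing $+1$ inside the second term absorbs the accumulator multiplications.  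For the storage bound, the $t$ accumulators are present throughout, while at any moment only the auxiliary table of Yao's algorithm for the single base currently being processed is live; that table holds at most $\lg E$ group elements, giving the overall bound of $t+\lg E$.

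The main (and only mildly subtle) obstacle is the uniformity step: one must not assume $S$ is independent, so $\phi$ can have a nontrivial kernel, but the coset-of-kernel observation makes the pushforward uniform regardless.  A secondary minor point is that drawing uniform samples from $\Z_E$ may require rejection sampling when $E$ is not a power of two, but these are bit-level operations and contribute nothing to the group operation count.
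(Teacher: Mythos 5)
Your proposal is correct and follows essentially the same route as the paper: process the generators one at a time, use Yao's online algorithm for the $t$ exponentiations of each common base while multiplying into $t$ accumulators, and establish uniformity by pushing the uniform distribution forward through a surjective homomorphism whose fibers are cosets of the kernel. The only (immaterial) difference is that you draw exponents uniformly modulo $E$ rather than modulo $|s_i|$, thereby skipping the order computations whose cost the paper folds into the $(3\lg E)|S|$ term.
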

\begin{proof}
We represent $S$ as a vector $\gvec{\gamma}=(\gamma_1,\ldots,\gamma_s)$.
To construct random elements $\beta_1,\ldots,\beta_t$, first set each $\beta_j$ to $\identity$.  Then, for each $\gamma_i$, compute $|\gamma_i|=p^{n_i}$, select $t$ uniformly random integers $z_{i,j}\in[0,p^{n_i})$, and set $\beta_j\leftarrow \beta_j\gamma_i^{z_{i,j}}$ for each $j$.  We assume all the $z_{i,j}$ are chosen independently.

The cost of computing $|\gamma_i|$ is at most $2m\lg{p}=2\lg{E}$ group operations.  By Yao's Theorem, the cost of $t$ exponentiations of the common base $\gamma_i$ is at most
$$\lg{E}+c\hspace{1pt}t\left(\frac{\lceil\lg(E+1)\rceil}{\lceil\lg\lg(E+2)\rceil}\right)$$
group operations, where $c\le 2$.  Accounting for the multiplication by $\beta_j$ and summing over the $\gamma_i$ yields the bound $T_R(S,t)$.  We only need to store the $\beta_j$ and at most $\lg{E}$ powers of a single $\gamma_i$ at each step (we don't count the size of the input set $S$, since we only access one element of $S$ at a time).

Clearly the $\beta_j$ are independent; we must show that each is uniformly distributed over~$G$.
Let $H=\ZZ/p^{n_1}\ZZ\times\cdots\times\ZZ/p^{n_s}\ZZ$.  The map $\varphi:H\to G$ that sends $\evec{z}$ to $\gvec{\gamma}^\evec{z}$ is a surjective group homomorphism, and we have $\beta_j = \varphi(\evec{z})$, where $\evec{z}=(z_{1,j},z_{2,j}\ldots,z_{s,j})$ is uniformly distributed over $H$.  As each coset of $\ker\varphi$ has the same size, it follows that $\beta_j$ is uniformly distributed over $G\cong H/\ker\varphi$.
\end{proof}

In practice, we may wish to generate random elements ``on demand", without knowing $t$.  We can generate random elements in small batches of size $t\approx \lg\lg(E+2)$ to effectively achieve the same result.  If $S$ is reasonably small, the first term of $T_R(S,t)$ may be treated as a precomputation and need not be repeated.

Provided that $|S|=O(|G|^{1/2-\epsilon})$, we may apply Lemma \ref{lemma:RandomElements} and Proposition \ref{proposition:pRandBasisComplexity} to compute a basis for $G=\langle S\rangle$, with high probability, using $O(|G|^{1/2})$ group operations.  By contrast, Algorithm \ref{algorithm:pbasis} uses $O(|S||G|^{1/2})$ group operations when $S$ is large, as does the algorithm of Buchmann and Schmidt \cite{Buchmann:GroupStructure}.  However, we note that both of these algorithms are (or can be made) deterministic.

\section{Constructing a basis in the general case}\label{section:GeneralBasis}

We now suppose that $G$ is an arbitrary finite abelian group.  If we know the exponent of $G$, call it $\lambda(G)$, and its factorization into prime powers, we can easily reduce the computation of a basis for $G$ to the case already considered.  In fact, it suffices to know any reasonably small multiple $N$ of $\lambda(G)$, including $N=|G|$.  Factoring $N$ does not require any group operations, and it is, in any event, a much easier problem than computing $\lambda(G)$ in a generic group, hence we ignore this cost.\footnote{We have subexponential-time probabilistic algorithms for factoring versus exponential lower bounds for computing the group exponent with a probabilistic generic algorithm \cite{Babai:PolynomialTime}. Most deterministic factoring algorithms are already faster than the $\Omega(N^{1/3})$ lower bound of \cite[Thm.~2.3]{Sutherland:Thesis}.}

As shown in the author's thesis, $\lambda(G)$ can be computed using $o(|G|^{1/2})$ group operations  \cite{Sutherland:Thesis}.  This bound is strictly dominated by the worst-case complexity of both the algorithms presented in the previous section, allowing us to extend our complexity bounds for abelian $p$-groups to the general case.  The basic facts needed for the reduction are given by the following lemma.

\begin{lemma}\label{lemma:SubSample}
Let $G$ be a finite abelian group and let $N$ be a multiple of $\lambda(G)$.  Let $p_1,\ldots,p_k$ be distinct primes dividing $N$, and let $G_{p_i}$ be the Sylow $p_i$-subgroup of $G$.
\renewcommand\labelenumi{(\roman{enumi})}
\begin{enumerate}
\item
Given a generating set $S$ for $G$, one can compute generating sets $S_1,\ldots,S_k$ for $G_{p_1},\ldots,G_{p_k}$, each of size $|S|$, using $O\left(|S|\lg^{1+\epsilon}N\right)$ group operations.
\vspace{3pt}
\item
Given a uniformly distributed random $\beta\in G$, one can compute elements $\beta_1,\ldots,\beta_k$ uniformly distributed over the groups $G_{p_1},\ldots,G_{p_k}$ $($respectively$)$, using $O\left(\lg^{1+\epsilon}N\right)$ group operations.
\end{enumerate}
\end{lemma}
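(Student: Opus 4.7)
The plan is to use the cofactor trick from Algorithm~\ref{algorithm:DL}. Let $q_i=p_i^{v_{p_i}(N)}$ be the $p_i$-part of $N$ and set $M_i=N/q_i$, so that $\gcd(M_i,q_i)=1$ and $q_j\mid M_i$ for every $j\ne i$. Since $N$ is a multiple of $\lambda(G)$, every $\alpha\in G$ has order dividing $N$ and decomposes uniquely in the Sylow product $G\cong\prod_j G_{p_j}$. The map $\phi_i\colon G\to G_{p_i}$ given by $\phi_i(\alpha)=\alpha^{M_i}$ is then a group homomorphism: the exponent $M_i$ annihilates $G_{p_j}$ for $j\ne i$ and, being coprime to $|G_{p_i}|$, acts as an automorphism on $G_{p_i}$. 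Hence $\phi_i$ is surjective with kernel $\prod_{j\ne i}G_{p_j}$.

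For (i), I would set $S_i=\{\alpha^{M_i}:\alpha\in S\}$. Surjectivity of $\phi_i$ combined with $\langle S\rangle=G$ gives $\langle S_i\rangle=\phi_i(G)=G_{p_i}$, and $|S_i|\le|S|$ by construction. For (ii), set $\beta_i=\phi_i(\beta)$; a surjective homomorphism sends a uniform distribution to a uniform distribution, so each $\beta_i$ is uniform on $G_{p_i}$. Independence follows from the Sylow decomposition: writing $\beta=(\beta^{(1)},\ldots,\beta^{(k)})$ in $\prod_j G_{p_j}$, we have $\beta_i=(\beta^{(i)})^{M_i}$, which depends only on $\beta^{(i)}$, and the coordinates of a uniform element of a direct product are mutually independent.

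Both parts reduce to the common-base multi-exponentiation problem of computing $\alpha^{M_1},\ldots,\alpha^{M_k}$ for a single element $\alpha$, with each exponent $M_i\le N$ of bit-length at most $\lg N$. The main technical point is that a direct application of Yao's Theorem degrades when $k$ grows close to $\lg N$; this is exactly why \cite[Alg.~7.4]{Sutherland:Thesis} is invoked in the derivation of~(\ref{equation:TDLGeneral}). Using that algorithm, the $k$ exponentiations cost $O(\lg N\cdot\lg\lg N/\lg\lg\lg N)=O(\lg^{1+\epsilon}N)$ group operations per base, giving $O(|S|\lg^{1+\epsilon}N)$ for (i) and a single invocation of this bound for (ii). Everything else follows from the identity $\gcd(M_i,q_i)=1$ and the decomposition of $G$ as the product of its Sylow subgroups.
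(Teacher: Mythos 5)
Your proposal is correct and follows essentially the same route as the paper: the exponents $M_i$ you define are exactly the paper's $N_i$ (the largest divisor of $N$ coprime to $p_i$), the maps $\phi_i$ and the push-forward arguments for generation and uniformity are identical, and both invoke Algorithm~7.4 of \cite{Sutherland:Thesis} to get the $O(\lg^{1+\epsilon}N)$ cost for the $k$ common-base exponentiations. The only additions are your (correct but unrequired) remarks on independence of the $\beta_i$ and the explicit verification that $\phi_i$ is a homomorphism.
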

\begin{proof}
Let $N_i$ be the largest divisor of $N$ relatively prime to $p_i$.  Given $\beta\in S$, or a random $\beta\in G$, we compute $\beta_1=\beta^{N_1},\ldots,\beta_k=\beta^{N_k}$ with either Algorithm~7.3 or Algorithm~7.4 of \cite{Sutherland:Thesis}, using $O(\lg^{1+\epsilon}N)$ group operations.\footnote{Algorithm 7.3 is due to Celler and Leedham-Green \cite{Celler:GLOrder}.}

The map $\phi_i:G\to G_{p_i}$ sending $\beta$ to $\beta^{N_i}$ is a surjective group homomorphism, invertible on $G_{p_i}\subset G$.  Thus if $S$ generates $G$, then $S_i=\phi_i(S)$ generates $G_{p_i}$, which proves (i).  If $\beta$ is uniformly distributed over $G$, then $\phi_i(\beta)$ is uniformly distributed over $G_{p_i}$, proving (ii).
\end{proof}

We now extend Propositions \ref{proposition:BasisComplexity} and \ref{proposition:pRandBasisComplexity} to arbitrary finite abelian groups.

\begin{proposition}\label{proposition:basis}
Let $G$ be a finite abelian group whose nontrivial Sylow subgroups are $G_{p_1},\ldots,G_{p_k}$, and suppose that the exponent $($resp.~order$)$ of $G$ is given.  Let $S$ be a generating set for $G$, with $S_i$ as in Lemma \ref{lemma:SubSample}.
\vspace{3pt}
\renewcommand\labelenumi{(\roman{enumi})}
\begin{enumerate}
\item
There is a generic algorithm to compute a basis for $G$ which uses
$$O\bigl(|S|\lg^{1+\epsilon}|G|\bigr)\medspace+\medspace\sum \TB(S_i)$$
group operations, where $\TB(S_i)$ is bounded as in Proposition \ref{proposition:BasisComplexity}.
\vspace{8pt}
\item
Given a randomized black box for $G$, there is a Monte Carlo $($resp.~Las Vegas$)$ generic algorithm to compute a basis for $G$ using an expected
$$O\bigl(\lg^{2+\epsilon}|G|\bigr)\medspace+\medspace\sum \TB^*(G_{p_i})= O(|G|^{1/2})$$
group operations, where $\TB^*(G_{p_i})$ is bounded as in Proposition \ref{proposition:pRandBasisComplexity}.
\end{enumerate}
\end{proposition}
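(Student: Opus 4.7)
The plan is to reduce to the abelian $p$-group case by decomposing $G$ into its Sylow subgroups and then invoking Propositions \ref{proposition:BasisComplexity} and \ref{proposition:pRandBasisComplexity} component-wise. A basis for $G$ is precisely a concatenation of bases for the nontrivial Sylow subgroups $G_{p_1},\ldots,G_{p_k}$, so it suffices to compute each separately and then juxtapose the results.

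For (i), I would first apply Lemma \ref{lemma:SubSample}(i) to $S$ to obtain generating sets $S_1,\ldots,S_k$ for $G_{p_1},\ldots,G_{p_k}$, paying $O(|S|\lg^{1+\epsilon}|G|)$ group operations. I would then run Algorithm \ref{algorithm:pbasis} on each $S_i$, which by Proposition \ref{proposition:BasisComplexity} uses $\TB(S_i)$ operations and returns a basis for $G_{p_i}$. Concatenating these bases yields a prime-power basis for $G$, and summing the costs gives the bound stated in (i).

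For (ii), I would use the randomized black box for $G$ to draw samples and apply Lemma \ref{lemma:SubSample}(ii) to each sample, producing a uniform random element of every $G_{p_i}$ at an overhead of $O(\lg^{1+\epsilon}|G|)$ per sample. This provides a simulated randomized black box for each Sylow factor, on which Algorithm \ref{algorithm:pRandBasis} can be run with parameter $t$. Proposition \ref{proposition:pRandBasisComplexity} bounds the expected cost on $G_{p_i}$ by $T^*_B(G_{p_i})$ and its failure probability by $p_i^{-t}$. The per-sample splitting overhead is charged to the discrete-logarithm computation done on that sample, except for the $O(1)$ seed samples per Sylow factor whose overhead sums to $O(k\lg^{1+\epsilon}|G|)=O(\lg^{2+\epsilon}|G|)$. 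A union bound then shows a total failure probability of at most $\sum p_i^{-t}$, establishing the Monte Carlo claim. For the Las Vegas variant, the given value of $|G|$ determines $|G_{p_i}|$ as the $p_i$-part of $|G|$, so one can invoke the tight-bound version of Algorithm \ref{algorithm:pRandBasis} described immediately before this section and restart on any failure.

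The one nontrivial step beyond this bookkeeping is the final $O(|G|^{1/2})$ claim, which requires the arithmetic inequality $\sum_i\sqrt{|G_{p_i}|}=O(\sqrt{|G|})$ for pairwise coprime integers $|G_{p_i}|\ge 2$ with product $|G|$. I expect this to be the main obstacle. I would prove it by rewriting the sum as $\sqrt{|G|}\sum_i\bigl(\prod_{j\ne i}|G_{p_j}|\bigr)^{-1/2}$ and using $\prod_{j\ne i}|G_{p_j}|\ge 2^{k-1}$, which bounds the inner sum by $k\hspace{1pt}2^{(1-k)/2}$; this quantity is uniformly $O(1)$ in $k$, giving the desired conclusion and completing the reduction of both parts to the $p$-group case.
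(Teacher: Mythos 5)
Your proposal is correct and follows essentially the same route as the paper: reduce to the Sylow $p$-subgroups via Lemma \ref{lemma:SubSample}, apply Propositions \ref{proposition:BasisComplexity} and \ref{proposition:pRandBasisComplexity} factor by factor, and handle the Las Vegas variant by passing the known $p_i$-part of $|G|$ as the order bound. The only difference is that where the paper cites its appendix inequality (Lemma \ref{lemma:prodsum}, $\sum\sqrt{x_i}\le\frac{3}{2}\prod\sqrt{x_i}$) to get $\sum|G_{p_i}|^{1/2}=O(|G|^{1/2})$, you prove the needed bound directly via $\prod_{j\ne i}|G_{p_j}|\ge 2^{k-1}$, which is a valid elementary argument that in fact recovers the same constant $\frac{3}{2}$ (attained at $k=3$).
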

\begin{proof}
(i) is immediate from Lemma \ref{lemma:SubSample} and Proposition \ref{proposition:BasisComplexity}.  (ii)
follows similarly from Proposition \ref{proposition:pRandBasisComplexity} and the comments following, using the bound
$$\sum |G_{p_i}|^{1/2}\medspace\le\medspace \frac{3}{2}\prod |G_{p_i}|^{1/2} \medspace=\medspace \frac{3}{2}|G|^{1/2}$$
from Lemma \ref{lemma:prodsum}.
\end{proof}

\begin{corollary}\label{corollary:basis}
Given a randomized black box for a finite abelian group $G$, there is a Monte Carlo algorithm to compute a basis for $G$ using $O(|G|^{1/2})$ group operations.
\end{corollary}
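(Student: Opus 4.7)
The plan is to reduce the unknown-exponent case to the situation already handled by Proposition \ref{proposition:basis}(ii), which assumes the exponent (or a small multiple thereof) is given. The only missing ingredient is a way to obtain such a multiple $N$ within our budget of $O(|G|^{1/2})$ group operations, and this is supplied by the thesis result cited earlier in the section: $\lambda(G)$ can be computed using $o(|G|^{1/2})$ group operations.

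Concretely, I would proceed as follows. First, invoke the generic exponent algorithm from \cite{Sutherland:Thesis} to compute $N=\lambda(G)$ at a cost of $o(|G|^{1/2})$ group operations. Then factor $N$ into prime powers; as noted in the text, this is not a generic group operation and can be done using known integer-factorization algorithms whose cost is negligible compared to $|G|^{1/2}$, so we ignore it. With $N$ and its factorization in hand, the Monte Carlo half of Proposition \ref{proposition:basis}(ii) applies directly, producing a basis using an expected $O(|G|^{1/2})$ group operations. Summing the two phases, the total cost is $o(|G|^{1/2}) + O(|G|^{1/2}) = O(|G|^{1/2})$, as required.

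There is essentially no obstacle once the exponent algorithm is taken as a black box: Proposition \ref{proposition:basis}(ii) already packages the reduction through Lemma \ref{lemma:SubSample} to the Sylow subgroups, Proposition \ref{proposition:pRandBasisComplexity} handles each $p$-group, and the product-vs-sum bound from Lemma \ref{lemma:prodsum} ensures that summing $\sum |G_{p_i}|^{1/2}$ over the Sylow factors does not exceed a constant multiple of $|G|^{1/2}$. The only thing to verify is that the $o(|G|^{1/2})$ exponent computation is a strict lower-order term with respect to the $O(|G|^{1/2})$ basis step, so the combined algorithm remains $O(|G|^{1/2})$; this is immediate from the asymptotic relation $o(\cdot) + O(\cdot) = O(\cdot)$, and the failure probability is inherited from Proposition \ref{proposition:basis}(ii).
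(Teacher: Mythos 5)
Your proposal is correct and follows essentially the same route as the paper: compute $N=\lambda(G)$ with the $o(|G|^{1/2})$ generic exponent algorithm from the cited thesis, factor $N$ at no generic cost, and then invoke the Monte Carlo part of Proposition \ref{proposition:basis} to finish within $O(|G|^{1/2})$ group operations. The only cosmetic difference is that the paper also notes the exponent computation itself succeeds only with high probability (so that failure mode is folded into the overall probabilistic guarantee alongside the one you mention from Proposition \ref{proposition:basis}), but this does not affect the validity of your argument.
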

\begin{proof}
Algorithm 8.1 of \cite{Sutherland:Thesis} computes $N=\lambda(G)$ with high probability and uses $o(\sqrt{N})$ group operations, assuming Algorithms 5.1 and 5.2 of \cite{Sutherland:Thesis} are used for order computations.  The corollary then follows from (ii) of Proposition~\ref{proposition:basis}.
\end{proof}

If we are given a generating set $S$ with $|S|=O(|G|^{1/2-\epsilon})$, we may apply Lemma~\ref{lemma:RandomElements} to obtain an analogous corollary.

The space required by the algorithms of Proposition \ref{proposition:basis} and Corollary \ref{corollary:basis} can be made quite small, polynomial in $\lg|G|$, using algorithms based on Pollard's rho method to handle the base cases of the discrete logarithm computations and applying the search used in Algorithm 5.1 of \cite{Sutherland:Thesis}.  If this is done, the complexity bound for computing $\lambda(G)$ increases to $O(N^{1/2})$ (but will typically be better than this).

It is not necessary to use a particularly fast algorithm to compute $\lambda(G)$ in order to prove Corollary \ref{corollary:basis}; any $O(N^{1/2})$ algorithm suffices.  However, the time to compute a basis for $G$ is often much less then $|G|^{1/2}$ group operations, as the worst case may arise rarely in practice.  Applying Algorithms 5.1 and 5.2 of \cite{Sutherland:Thesis} yields considerable improvement in many cases.\footnote{For example, Teske reports computing a basis for the ideal class group $G$ of $\mathbb{Q}[\sqrt{D}]$, with $D=-4(10^{30}+1)$, using $243,207,644\approx 7.1|G|^{1/2}$ group operations \cite{Teske:GroupStructure}.  In \cite{Sutherland:Thesis}, a basis for $G$ is computed using $250,277\approx 2.4|G|^{1/3}$ group operations.}

These comments are especially relevant when one only wishes to compute a basis for a particular Sylow $p$-subgroup $H$ of $G$ (perhaps as a prelude to extracting $p$th roots in $G$).  Once we have computed $\lambda(G)$, we can compute a basis for any of~$G$'s Sylow subgroups with a running time that typically depends only on the size and shape of the subgroup of interest, not on $G$.  The following proposition follows immediately from Lemma \ref{lemma:SubSample} and Proposition \ref{proposition:pRandBasisComplexity}.

\begin{proposition}
Let $H$ be a Sylow $p$-subgroup of a finite abelian group $G$.  Given a multiple $N$ of the exponent of $G$ and a randomized black box for $G$, there is a probabilistic generic algorithm to compute a basis for $H$ using
$$O\bigl(r\lg^{1+\epsilon}{N}\bigr)+ \TB^*(H)= O(r\lg^{1+\epsilon}{N}+|H|^{1/2})$$
group operations, where $r$ is the rank of $H$.
\end{proposition}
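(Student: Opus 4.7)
The plan is to reduce to the $p$-group case handled by Proposition \ref{proposition:pRandBasisComplexity} by simulating a black box for $H$ from the given black box for $G$. Since $N$ is a multiple of $\lambda(G)$, write $N=p^aM$ with $\gcd(M,p)=1$. For any $\beta\in G$ the element $\beta^M$ lies in the Sylow $p$-subgroup $H$, and if $\beta$ is uniformly distributed over $G$ then $\beta^M$ is uniformly distributed over $H$ by Lemma \ref{lemma:SubSample}(ii). The same lemma bounds this projection by $O(\lg^{1+\epsilon}N)$ group operations per element, so composing the black box for $G$ with the projection $\beta\mapsto\beta^M$ yields a randomized black box for $H$ whose per-query cost is $O(\lg^{1+\epsilon}N)$.

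I would then run Algorithm \ref{algorithm:pRandBasis} on $H$ using this simulated black box, with the parameter $t$ taken to be a small constant (for a Monte Carlo algorithm with constant correctness probability). Proposition \ref{proposition:pRandBasisComplexity} bounds the internal work of the algorithm by $T^*_B(H)=O(|H|^{1/2})$ group operations, so all that remains is to charge for the calls to the simulated black box. Revisiting the proof of Proposition \ref{proposition:pRandBasisComplexity}, one sees that in each of the (at most) $r$ stages the probability of failing to complete the stage after processing a fresh random element is at most $1/p\le 1/2$; hence each stage consumes $O(1)$ random elements in expectation, and together with the $t$ elements used in the final confirmation loop the algorithm makes $O(r+t)=O(r)$ expected queries to the black box.

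Multiplying by the $O(\lg^{1+\epsilon}N)$ cost per query gives an expected sampling cost of $O(r\lg^{1+\epsilon}N)$. Adding this to $T^*_B(H)=O(|H|^{1/2})$ produces the stated bound $O(r\lg^{1+\epsilon}N+|H|^{1/2})$. The only step that requires real care is this bookkeeping for the random-element cost: it is the geometric-tail estimate already established inside the proof of Proposition \ref{proposition:pRandBasisComplexity} that keeps the number of queries linear in $r$ rather than proportional to $|H|^{1/2}$, which is exactly what is needed in order for the sampling overhead not to dominate the $|H|^{1/2}$ term.
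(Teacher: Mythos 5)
Your proposal is correct and follows the same route as the paper, which derives this proposition directly from Lemma~\ref{lemma:SubSample}(ii) (simulating a randomized black box for $H$ via the projection $\beta\mapsto\beta^{M}$ at a cost of $O(\lg^{1+\epsilon}N)$ per query) combined with Proposition~\ref{proposition:pRandBasisComplexity}. Your additional bookkeeping showing the expected number of black-box queries is $O(r+t)$ is a worthwhile elaboration of the paper's ``follows immediately'' claim, but it is not a different approach.
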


\section{Performance results}\label{section:Performance}

We tested the new algorithms on abelian $p$-groups of various sizes and shapes in order to assess their performance.  As in previous sections, $G$ is an abelian group of size $p^n$, exponent $p^m$, and rank $r$, whose shape is given by a partition of $n$ into~$r$ parts, with largest part $m$.

Here we present results for $p=2$, as this permits the greatest variation in the other parameters, and also because the Sylow 2-subgroup is of particular interest in many applications.  Results for other small primes are similar.  When $p$ is large, the results are not as interesting: $n$, $r$, and $m$ are all necessarily small, and the computation is dominated by the discrete logarithms computed in the base cases, whose $\Theta(p^{r/2})$ performance is well understood.

Our tests in $p$-groups used a black box which represents each cyclic factor of $G$ using integers mod $p^{n_i}$.
This is a convenient but arbitrary choice.  Identical results are obtained for any black box implementation, since the algorithms are generic.  Our performance metric counts group operations (multiplications and inversions) and does not depend on the speed of the black box or the computing platform.\footnote{Thus the performance results reported here are not impacted by Moore's law.}

To compute discrete logarithms in the base cases, we used Shanks' baby-steps giant-steps algorithm \cite{Shanks:BabyGiant} extended to handle products of cyclic groups.  Rather than the lexicographic ordering used by Algorithm 9.3 of \cite{Sutherland:Thesis}, we instead compute a Gray code \cite{Knuth:AOPIV2} when enumerating steps, always using one group operation per step (this is especially useful for small $p$, saving up to a factor of 2).  A more significant optimization available with Shanks' method is the ability to perform $k$ discrete logarithms in a group of size $N$ using $2\sqrt{kN}$ (rather than $2k\sqrt{N}$) group operations by storing $\sqrt{kN}$ baby steps in a lookup table and then taking $\sqrt{N/k}$ giant steps as each of the $k$ discrete logarithms is computed.\footnote{One uses $\sqrt{kN/2}$ baby steps to optimize the expected case, assuming $\beta\in\langle\gvec{\alpha}\rangle$.}

This optimization is useful in Algorithms \ref{algorithm:DLp} and \ref{algorithm:EDLp}, even for a single discrete logarithm computation, as there may be many base cases in the same subgroup.  It is even more useful in the context of Algorithms \ref{algorithm:pbasis} and \ref{algorithm:pRandBasis}, as several calls to Algorithm \ref{algorithm:EDLp} may use the same basis.  In the bound for $\TDL(G)$ in Corollary \ref{corollary:DLComplexity}, this effectively replaces the factor $n/r$ by $\sqrt{n/r}$.  When the rank-dependent terms in $\TDL(G)$ dominate sufficiently, the bounds in Propositions \ref{proposition:BasisComplexity} and \ref{proposition:pRandBasisComplexity} can be improved by replacing  $|S|-r$ with $\sqrt{|S|-r|}$ and $t$ with $\sqrt{t}$ (respectively).
\bigbreak

Table 1 lists group operation counts for Algorithm \ref{algorithm:DLp} when computing discrete logarithms in $2$-groups of rectangular shape, corresponding to partitions of $n$ into $r$ parts, all of size $m=n/r$.  Each entry is an average over 100 computations of $\DL(\gvec{\alpha},\beta)$ for a random $\beta\in G$.  Precomputation was optimized for a single discrete logarithm (repeated for each $\beta$) and these costs are included in Table \ref{table:DLTests}.  Reusing precomputed values can improve performance significantly over the figures given here, particularly when additional space is used, as in \cite{Sutherland:AbelianRoots}.

Algorithm 1 used the parameter $t=\lfloor(\lg{n}-1)/r\rfloor$ in these tests, which was near optimal in most cases.  The optimal choice of $w$ is slightly less than that used in the proof of Proposition \ref{proposition:DLComplexity}, as the average size of the exponents is smaller than the bound used there.  For each entry in Table \ref{table:DLTests}, if one computes the bound on $\TDL(G)$ given by Proposition \ref{proposition:DLComplexity}, we find the constant $c$ close to 1 in most cases (never more than 1.5).  In the first four columns of Table \ref{table:DLTests}, the counts are dominated by the exponent-dependent terms of $\TDL(G)$, explaining the initially decreasing costs as $r$ increases for a fixed value of $n$ ($r$ is larger, but $m=n/r$ is smaller).  

\begin{table}
\begin{center}
\begin{tabular}{@{}r|rrrrrr@{}}
%\toprule
$n\backslash r$&${\bf 1}$ & ${\bf 2}$ &${\bf 4}$&${\bf 8}$& ${\bf 16}$& ${\bf 32}$\\
\midrule
  {\bf 32} &    113 &     89 &     76 &     94 &    669 &   97936\\
  {\bf 64} &    261 &    204 &    172 &    194 &    853 &  163750\\
 {\bf 128} &    591 &    455 &    380 &    370 &   1501 &  197518\\
 {\bf 256} &   1268 &   1021 &    833 &    760 &   2065 &  328839\\
 {\bf 512} &   2718 &   2165 &   1770 &   1607 &   3760 &  395187\\
 {\bf 1024}&   5949 &   3931 &   3755 &   4601 &   5745 & 657965\\
\bottomrule
\end{tabular}
\vspace{4pt}
\caption{Group operations to compute $DL(\gvec{\alpha},\beta)$ in $G\cong\left(\ZZ/2^{n/r}\ZZ\right)^r$.}\label{table:DLTests}
\end{center}
\end{table}

Table \ref{table:DLComparison} compares the performance of Algorithm \ref{algorithm:DLp} to Teske's generalization of the Pohlig--Hellman algorithm on groups of order $2^{256}$ with a variety of different shapes. The baby-steps giant-steps optimization mentioned above is also applicable to Teske's algorithm (with even greater benefit), and we applied this optimization to both algorithms. The figures in Table \ref{table:DLComparison} reflect averages over 100 computations of $DL(\gvec{\alpha},\beta)$ for random $\beta\in\langle\gvec{\alpha}\rangle$.

\begin{table}
\begin{center}
\begin{tabular}{@{}llrr@{}}
%\toprule
Group&Structure & Pohlig--Hellman--Teske & Algorithm 1\\
\midrule
$G_1$&$256$& 32862&1268\\
$G_2$&$128\cdot 64\cdot 32\cdot 16\cdot 8\cdot 4\cdot 2\cdot 1^2$& 8736&1095\\
$G_3$&$16^{16}$&2065&1036\\
$G_4$&$26\cdot 22\cdot 21\cdot 20\cdots 3\cdot 2\cdot 1$&17610&6647\\
$G_5$&$128\cdot 32^2\cdot 8^4\cdot 2^8\cdot 1^{16}$&534953&84047\\
$G_6$&$226\cdot 1^{30}$&1075172&81942\\
\bottomrule
\end{tabular}
\vspace{4pt}
\caption{Computing discrete logarithms in groups of order $2^{256}$.}\label{table:DLComparison}
%\begin{minipage}{1.0\linewidth}
\vspace{-6pt}
\small
The notation $a\cdot b^c$ indicates the group $\ZZ/2^a\ZZ\times\left(\ZZ/2^b\ZZ\right)^c$.
\normalsize
%\end{minipage}
\end{center}
\end{table}

%None of the standard $O(\sqrt{N})$ algorithms can feasibly compute a discrete logarithm in any of these groups, regardless of shape.  By contrast, both Algorithm \ref{algorithm:DLp} and Teske's algorithm can be practically applied to 2-groups that are much larger and of greater rank (up to 60 or so) than those listed in Table \ref{table:DLComparison}.

Two advantages of Algorithm \ref{algorithm:DLp} are apparent in Table \ref{table:DLComparison}.  In the first two rows, the complexity is dominated by~$m$, and Algorithm \ref{algorithm:DLp} has a nearly linear dependence on~$m$, versus a quadratic dependence in the algorithms of Pohlig--Hellman and Teske.  In the last two rows, the complexity is dominated by $p^{r/2}$.  Algorithm~1 computes just one base case in a subgroup of size $p^r$, due to the shapes of the groups, while Teske's algorithm computes $m$ base case in a subgroup of size $p^r$ (using $O(m^{1/2}p^{r/2})$ group operations, thanks to the baby-steps giant-steps optimization).

Table \ref{table:Basis} presents performance results for Algorithms \ref{algorithm:pbasis} and \ref{algorithm:pRandBasis} when used to construct a basis for four of the groups listed in Table \ref{table:DLComparison}.  The group operation counts are averages over 100 tests.  The first four rows list results for Algorithm \ref{algorithm:pbasis} when given a random generating set $S$ of size $r+t$.  The case $t=0$ is of interest because it covers the situation where $S$ is itself a basis, hence it may function as a basis verification procedure.  The costs in this case are comparable to the cost of computing a single discrete logarithm in the group generated by $S$ (this improves for $p>2$).

The last four rows of Table \ref{table:Basis} give corresponding results for Algorithm \ref{algorithm:pRandBasis} using a randomized black box.  In the first row for Algorithm \ref{algorithm:pRandBasis}, the algorithm is given the order of the group and runs as a Las Vegas algorithm, terminating only when it has found a basis for the entire group.  In the remaining rows, Algorithm \ref{algorithm:pRandBasis} is used as a Monte Carlo algorithm, correct with probability at least $1-p^{-t}$.
\suppressfloats[t]
\begin{table}
\begin{center}
\begin{tabular}{@{}lrrrrr@{}}
%\toprule
&$\qquad t$&$\qquad G_2$&$\qquad G_3$&$\qquad G_4$&$\qquad G_5$\\
\midrule
Algorithm \ref{algorithm:pbasis} &0&897&1739&9231&169633\\
											&20&27077&15383&50528&406102\\
                                 &40&45741&24946&71752&586501\\\vspace{3pt}
                                 &80&82921&44337&111451&788065\\
Algorithm \ref{algorithm:pRandBasis}&-&12727&2770&49219&372876\\
												&20&27725&15027&68362&494345\\
                                 	&40&44137&26066&79950&587645\\
                                 	&80&76054&40843&109257&936478\\
\bottomrule
\end{tabular}
\vspace{4pt}
\caption{Computing a basis for groups of order $2^{256}$.}\label{table:Basis}
\end{center}
\end{table}
\section{Acknowledgments}
The author would like to thank David Harvey for his extensive feedback on an early draft of this paper, and the referee, for improving the proof of Lemma~6.

\section{Appendix}
The inequality below is elementary and surely known.  Lacking a suitable reference, we provide a short proof here.
\begin{lemma}
For any real number $a>1$ there is a constant $c\le a/e^{1+\ln\ln{a}}$ such that for all real numbers $x_1,\ldots,x_n\ge a$ $($and any $n)$,
$$\sum x_i\medspace\le\medspace c\prod x_i.$$
\end{lemma}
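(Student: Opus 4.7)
The plan is to take logarithms and reduce the problem to bounding a simple function of $n$. Set $y_i = \ln x_i$, so $y_i \ge \ln a > 0$, and let $L = \sum_j y_j$. Then
\[
\frac{\sum_i x_i}{\prod_i x_i} \;=\; \sum_{i=1}^n e^{y_i - L}.
\]
Since $L - y_i = \sum_{j\ne i} y_j \ge (n-1)\ln a$, each summand is at most $a^{-(n-1)}$, so
\[
\frac{\sum_i x_i}{\prod_i x_i} \;\le\; \frac{n}{a^{n-1}}.
\]

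Next I would maximize $g(n) = n / a^{n-1}$ over $n$, treating $n$ as a continuous real variable to get a bound valid for all integer $n \ge 1$. Differentiating, $g'(n) = a^{-(n-1)}(1 - n\ln a)$, so the maximum occurs at $n^{*} = 1/\ln a$, where
\[
g(n^{*}) \;=\; \frac{1}{\ln a}\cdot a^{\,1-1/\ln a} \;=\; \frac{a}{\ln a}\cdot e^{-1} \;=\; \frac{a}{e\ln a}.
\]
Finally, since $e\ln a = e \cdot e^{\ln\ln a} = e^{1+\ln\ln a}$, this gives exactly the claimed constant
\[
c \;=\; \frac{a}{e^{\,1+\ln\ln a}}.
\]

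The argument is essentially self-contained; the only mildly delicate point is that $g$ attains its continuous maximum at a value $n^{*}$ that need not be an integer, but since we only need an upper bound on $g$ over positive integers (in fact over all $n \ge 1$), the continuous maximum suffices. For $1 < a \le e$ one has $\ln\ln a \le 0$, so the bound on $c$ is larger than $a/e$, which is consistent; for $a > e$ the optimal $n^{*}$ is less than $1$ and $g$ is decreasing on $[1,\infty)$, so in fact the even sharper bound $g(1)=1$ applies, and the stated constant is simply an easy-to-state uniform choice.
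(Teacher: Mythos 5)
Your proof is correct, and the first half takes a genuinely different (and simpler) route than the paper's. Where the paper reaches the bound $\sum x_i/\prod x_i\le na/a^n$ by an extremal exchange argument --- ordering the $x_i$, fixing the sum, perturbing $x_{n-1}$ to show the product can only decrease, and then a monotonicity argument in $\delta$ to reduce to the all-equal configuration --- you get the same bound in one line by writing $x_i/\prod_j x_j = 1/\prod_{j\ne i}x_j\le a^{-(n-1)}$ and summing over $i$ (the logarithmic change of variables is cosmetic here, but harmless). Your version needs no ordering assumption and no perturbation, and it still identifies the extremal configuration $x_1=\cdots=x_n=a$ (equality forces every $x_j=a$), so the sharper integer-$n$ refinement the paper records in its equation (21) is equally available from your argument. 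From that point on the two proofs coincide exactly: both relax $g(n)=n/a^{n-1}$ to a real variable, locate the critical point $n^*=1/\ln a$, and evaluate $g(n^*)=a/(e\ln a)=a/e^{1+\ln\ln a}$. Your closing remarks on the cases $a\le e$ versus $a>e$ are also correct; since the lemma only asserts $c\le a/e^{1+\ln\ln a}$, the fact that $g(1)=1$ is sharper for $a>e$ costs nothing.
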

\begin{proof}
We assume $x_1\le \cdots\le x_n$.  If we fix $\sum x_i$, we can only decrease $\prod x_i$ by supposing $x_{n-1}=a$, since if $x_{n-1}=a+\delta$, we have
$$x_{n-1}x_n = ax_n+\delta x_n\le ax_n+\delta a = a(x_n+\delta).$$
We now assume $x_1=\cdots=x_{n-1}=a$ and $x_n = a+\delta$ with $\delta\ge 0$.  Since
$$f(\delta)\medspace=\medspace\sum x_i/\prod x_i\medspace=\medspace\frac{(n-1)a+\delta}{a^{n-1}\delta}$$
is a decreasing function of $\delta$, we maximize $\sum x_i /\prod x_i$ by assuming $x_n=a$ as well.

Thus it suffices to consider the case $\sum x_i/\prod x_i=na/a^n$, and we now view $g(n)=na/a^n$ as a function of a real variable $n$, which is maximized by $n=1/\ln{a}$.  Therefore, we may bound $\sum x_i/\prod x_i$ by $(1/\ln{a})/a^{1/\ln{a}} = a/e^{1+\ln\ln{a}},$
and the lemma follows.
\end{proof}
The bound on $c$ given in the lemma is not necessarily tight, since $n$ must be an integer.  If we note that $g(n)=na/a^n$ is increasing for $n<1/\ln{a}$ and decreasing for $n>1/\ln{a}$, it follows that the best possible $c$ is
\begin{equation}\label{equation:app1}
c = \min\left(g\left(\left\lfloor\frac{1}{\ln{a}}\right\rfloor\right), g\left(\left\lceil\frac{1}{\ln{a}}\right\rceil\right)\right).
\end{equation}
Applying (\ref{equation:app1}) with $a=\sqrt{2}$, we obtain the following lemma.
\begin{lemma}\label{lemma:prodsum}
For any integers $x_1,\ldots,x_n>1$ we have $\sum \sqrt{x_i}\medspace\le\medspace \frac{3}{2}\prod\sqrt{x_i}.$
\end{lemma}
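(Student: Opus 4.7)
The plan is to reduce to the preceding (unnumbered) lemma via the substitution $y_i = \sqrt{x_i}$. Since each $x_i$ is an integer with $x_i > 1$ we have $x_i \ge 2$, so $y_i \ge \sqrt{2}$; taking $a = \sqrt{2}$ in that lemma supplies a constant $c$, independent of $n$, with $\sum y_i \le c\prod y_i$. This is exactly the asserted inequality provided I can show $c \le 3/2$.

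The numerical bound will come from formula (\ref{equation:app1}). I would first compute $1/\ln a = 2/\ln 2$, which lies in $(2,3)$ (since $\ln 2 \in (2/3,1)$), so $\lfloor 1/\ln a\rfloor = 2$ and $\lceil 1/\ln a\rceil = 3$. Evaluating $g(n) = na/a^n$ at these two integers gives $g(2) = 2\sqrt{2}/2 = \sqrt{2}$ and $g(3) = 3\sqrt{2}/(2\sqrt{2}) = 3/2$. Because $g$ is increasing on $(0,1/\ln a)$ and decreasing on $(1/\ln a,\infty)$, the supremum of $g$ over positive integers is attained at one of these two points, so the best constant furnished by the preceding lemma is $\max(\sqrt{2},3/2) = 3/2$, yielding the claim.

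There is no real obstacle: once the substitution $y_i = \sqrt{x_i}$ is in place, the entire argument reduces to a two-point numerical evaluation supplied by the preceding analysis. The only thing worth double-checking is the location of $2/\ln 2$ relative to the nearby integers, which amounts to the easy estimate $\ln 2 > 2/3$.
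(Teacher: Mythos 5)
Your proof is correct and follows the paper's own route exactly: apply the preceding unnumbered lemma with $a=\sqrt{2}$ and evaluate $g(n)=na/a^n$ at the two integers bracketing $2/\ln 2\approx 2.885$, obtaining $g(2)=\sqrt{2}$ and $g(3)=3/2$. Note that you rightly take the \emph{maximum} of these two values (the supremum of the unimodal $g$ over positive integers), whereas the paper's displayed formula (\ref{equation:app1}) misprints this as a minimum; your version is the correct one, since $c=\sqrt{2}$ would fail already for $x_1=x_2=x_3=2$.
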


\bibliographystyle{amsplain}
%\bibliography{../general}
\providecommand{\bysame}{\leavevmode\hbox to3em{\hrulefill}\thinspace}
\providecommand{\MR}{\relax\ifhmode\unskip\space\fi MR }
% \MRhref is called by the amsart/book/proc definition of \MR.
\providecommand{\MRhref}[2]{%
  \href{http://www.ams.org/mathscinet-getitem?mr=#1}{#2}
}
\providecommand{\href}[2]{#2}

\end{document}